\definecolor{MyLinkColor}{rgb}{0,0,0.4}
\newcommand{\vdiv}{\mathop{\rm div}}
\newcommand{\R}{{\mathbb R}}
\newcommand{\RRM}{{\mathbb R}}
\newcommand{\bA}{{\mathbb A}}
\newcommand{\bT}{{\mathbb T}}
\newcommand{\bB}{\mathbb{B}}
\newcommand{\bD}{\mathbb{D}}
\newcommand{\N}{{\mathbb N}}
\newcommand{\bV}{{\mathbb V}}
\newcommand{\kL}{\mathcal{L}}
\newcommand{\cP}{\mathcal{P}}
\newcommand{\cU}{\mathcal{U}}
\newcommand{\clu}{\mathcal{U}}
\newcommand{\clp}{\mathcal{P}}
\newcommand{\PV}{\mathop{\rm PV}\nolimits}
\newcommand{\ov}{\overline}
\newcommand{\p}{\partial}
\newcommand{\e}{\varepsilon}
\newcommand{\0}{\Omega}
\newcommand{\G}{\Gamma}
\newcommand{\be}{\begin{equation}}
\newcommand{\ee}{\end{equation}}
\newcommand{\wtdu}{\widetilde{(\nabla u)^\pm}}
\newcommand{\wtp}{\widetilde{\Pi^\pm}}
\newcommand{\wtt}{\widetilde{T_1^\pm}}
\newcommand{\wtdtwou}{\widetilde{\p_2u^\pm}}
\newtheorem{thm}{Theorem}[section]
\newtheorem{prop}[thm]{Proposition}
\newtheorem{lemma}[thm]{Lemma}
\newtheorem{cor}[thm]{Corollary}
\theoremstyle{remark}
\numberwithin{equation}{section}
\begin{document}

\title[One-phase Stokes flow by capillarity]{Capillarity driven Stokes flow:\\ the one-phase problem as small viscosity limit}
\thanks{Partially supported by DFG Research Training Group~2339 ``Interfaces, Complex Structures, and Singular Limits in Continuum Mechanics - Analysis and Numerics''}

\author{Bogdan--Vasile Matioc}
\address{Fakult\"at f\"ur Mathematik, Universit\"at Regensburg,   93040 Regensburg, Deutschland.}
\email{bogdan.matioc@ur.de}
  
\author{Georg Prokert}
\address{Faculty of Mathematics and Computer Science, Technical University Eindhoven,   The Netherlands.}
\email{g.prokert@tue.nl}

\subjclass[2020]{76D07; 35R37;  35K55}
\keywords{Quasistationary Stokes problem;  Singular integrals; Single layer potential.}

\begin{abstract}
 We consider the quasistationary Stokes flow that describes the motion of a two-dimensional fluid body 
under the influence of surface tension effects in an unbounded, infinite-bottom geometry. 
We reformulate the problem as a fully nonlinear parabolic evolution problem for the function that parameterizes  the boundary of the fluid with the nonlinearities
 expressed in terms of singular integrals.
We  prove well-posedness of the problem in the  subcritical Sobolev spaces $H^s(\mathbb{R})$  up to critical regularity,  
 and establish   parabolic smoothing properties for the solutions.
Moreover, we identify the problem as the singular limit of the two-phase quasistationary   Stokes flow   when the viscosity of one of the fluids vanishes.
\end{abstract}

\maketitle

\section{Introduction}

In this paper we consider the two-dimensional flow of a  fluid layer $\0(t) $ of infinite depth
 in the case when the motion of  the incompressible fluid is governed  by the quasistationary Stokes equations 
  and the motion is driven by surface tension at the free boundary $\Gamma(t)=\p\0(t)$. 
  We consider the one-phase problem, i.e. no forces are exerted on the liquid by the medium above it.
 The mathematical model is  given  by the following system of equations
\begin{subequations}\label{STOKES}
\begin{equation}\label{StP}
\left.
\begin{array}{rclll}
\mu\Delta v-\nabla p&=&0&\mbox{in $\Omega(t)$,}\\
\vdiv v&=&0&\mbox{in $\Omega(t)$,}\\{}
T_\mu(v, p)\tilde\nu&=&\sigma\tilde\kappa\tilde\nu&\mbox{on $\Gamma(t)$,}\\
(v, p)(x)&\to&0&\mbox{for $|x|\to\infty$,}\\
V_n&=& v\cdot\tilde \nu&\mbox{on $\Gamma(t)$}
\end{array}\right\}
\end{equation}
for $t>0$, where the  interface $\Gamma(t)$ at time $t$ is given as a graph of a function $f(t,\cdot):\RRM\longrightarrow\RRM$, i.e the fluid domain $\0(t)$ and its boundary $\Gamma(t)$ are defined by
\begin{align*}
\0(t)&:=\{x=(x_1,x_2)\in\R^2\,:\, x_2<f(t,x_1)\},\\[1ex]
\G(t)&:=\p\0(t):=\{(\xi, f(t,\xi))\,:\, \xi\in\R\}.
\end{align*}
Additionally, the  interface $\Gamma(t)$ is assumed to be known at time~${t=0}$,  i.e.
 \begin{align}\label{IC}
 f(0,\cdot)=f^{(0)}.
 \end{align}
 \end{subequations}
 In Eq. \eqref{StP} above, $v=v(t):\0(t)\longrightarrow\R^2 $ and $p=p(t):\0(t)\longrightarrow\R$ 
 are  the velocity and the pressure of the Newtonian fluid, $\tilde\nu=(\tilde\nu^1, \tilde\nu^2)$ is the unit exterior normal to~$\p\Omega$, 
 $\tilde\kappa$ denotes the curvature of the interface  (negative where $\Omega(t)$ is convex), and  
 $T_\mu(v,p)=(T_{\mu,ij}(v,p))_{1\leq i,\, j\leq 2}$ is the stress tensor  which is given by
\begin{align}\label{defT}
T_{\mu}(v, p):=- pE_2+\mu\big[\nabla v+(\nabla v)^\top\big], \quad (\nabla v)_{ij}:=\p_jv_i.
\end{align}
Moreover, $V_n$ is the normal velocity of the interface~$\Gamma(t)$,  $a\cdot b$ denotes the Euclidean scalar product of two vectors $a,\, b\in\R^2$,  $E_2\in \R^{2\times 2}$ is the identity matrix,
 and the positive constants~$\mu$ and~$\sigma$  are the 
dynamic viscosity of the fluid  and the surface tension coefficient at the interface~$\Gamma(t),$ respectively. 

Previous analysis related to \eqref{StP} considered mainly the case of a sufficiently regular bounded fluid domain~$\0(t)$. 
More precisely, in \cite{PG97} the authors studied the quasistationary motion of a free capillary liquid drop in  $\R^d$ for initial data in    $H^{s+1}(\Sigma)$, $s\geq s_1,$ 
 $s_1$ being the smallest  integer that satisfies   $s_1>3+(d-1)/2$  and   $\Sigma\subset\R^d$ the smooth boundary of a strictly star shaped domain in $\R^d$.
 The authors established in \cite{PG97} the well-posedness of the problem and they also showed  that the equilibria of the problem, which are  balls, are exponentially stable.
 In the context when $\Sigma $ is the boundary of the unit ball, it is  proven in  \cite{Fr02a} (for $d=2$)  that solutions corresponding to small data in $H^5(\Sigma)$ exist globally  and converge to a ball,
  while in   \cite{FR02b} (for $d=3$)  the authors  established the same result for  small data in $H^6(\Sigma).$
Finally, in three space dimensions and for an initially bounded geometry possessing a~${\rm C}^{3+\alpha}$-boundary, $\alpha>0$,  it was shown in \cite{S99} that the quasistationary Stokes flow  is well-posed, 
and  in \cite{Solo99} the same author has rigorously  justified this problem  as the singular limit of  Navier-Stokes flow when the Reynolds number vanishes.
The local well-posedness and the stability issue for the two-phase quasistationary Stokes flow  (with or without phase transitions) in a bounded geometry in $\R^d$, with $d\geq2$, has been recently studied in \cite{PS16} 
 in the phase space~$W^{2+\mu-2/p}_p(\Sigma)$, with~$1\geq \mu>(d+2)/p$,  by using a maximal $L_p$-regularity approach.
In the context of \cite{PS16},~$\Sigma$ is  a real analytic hypersurface over which the boundary between the two fluid phases is parameterized.

 We emphasize that in the references discussed above the moving interface is at least of class ${\rm C}^2$,
 whereas the critical $L_2$-Sobolev space for \eqref{STOKES} is~$H^{3/2}(\R)$, see \cite{MP2021, MP2022}.  
Our goal is to establish the well-posedness of \eqref{STOKES} in the subcritical spaces $H^{s}(\R)$ with~$s\in(3/2,2)$, see Theorem~\ref{MT1}. 
One of the obvious difficulties lies in the fact that, for $f\in H^{s}(\R)$, the curvature term in $\eqref{StP}_3$ is merely a distribution.
To handle this issue we use a strategy  inspired by the approach in the papers \cite{BaDu98, MP2021, MP2022}
where,  for the corresponding two-phase problem, potential theory was used to determine
 the velocity and pressure fields in terms  of $f$.
Such a strategy was applied also in the context of the Muskat problem, see  the surveys \cite{G17, GL20}, and it provides  
 quite optimal results as the mathematical reformulations of the problems obtained by using this strategy require less 
smallness and regularity assumptions on the data compared to other approaches based on Lagrangian or Hanzawa transformations.
  
The first goal of this paper is to show that, at each time instant $t>0$,  the free boundary,  given via~${f=f(t)}$, identifies the velocity field~$v=v(t)$ and the pressure~$p=p(t)$ uniquely.
More precisely, as shown in Theorem \ref{T:1}, if $f\in H^3(\R)$, then $(v,p)$ is given by the hydrodynamic single layer potential with a density $\beta=(\beta_1,\beta_2)^\top$ which satisfies
  \begin{equation}\label{iINT}
\Big(\frac{1}{2}- \bD(f)^*\Big)[\beta']=\sigma \Big(-\Big(\frac{f'^2}{\omega+\omega^2}\Big)',\Big(\frac{f'}{\omega}\Big)'\Big)^\top=\sigma g',
\end{equation}
where $(\cdot)'$ is the derivative with respect to the spatial coordinate $\xi\in\R$  and $g=g(f)$ is defined in \eqref{defgn} below.
 Furthermore, $\bD(f)^*$ is the $L_2$-adjoint of the double layer potential~$\bD(f)$, see~\eqref{defD}, and $\omega:=(1+f'^2)^{1/2}$, see \eqref{nutau}.
Concerning \eqref{iINT},  the following issues need to be clarified: 
\begin{itemize}
\item[(i)] The invertibility of the operators $\pm 1/2- \bD(f)$ (and $\pm 1/2- \bD(f)^*$) in $\kL(H^1(\R)^2)$;
 \item[(ii)] the question whether $(\frac{1}{2}- \bD(f)^*)^{-1}g'$ is the derivative of some $\beta\in (H^2(\R))^2$.
\end{itemize}
 We remark that  these issues are new compared to the treatment of the two-phase problem.

 With respect to (i), the main step is performed in Theorem~\ref{T:L2spec} where the invertibility in~${\kL(L_2(\R)^2)}$ is established for each~$f\in{\rm BUC}^1(\R)$.
 At this point, we rely on the Rellich identities~\eqref{RELLICH1}-\eqref{RELLICH5} for the Stokes boundary value problem which have been 
  exploited, in a bounded geometry in $\R^n$ with $n\geq3$, also in~\cite{FKV88}.
 In the unbounded two-dimensional setting considered in  the present paper we provide new arguments which use, among others, 
 also a  Rellich identity  obtained  in \cite{MBV18} in the context of the Muskat  problem.
 Based on Theorem~\ref{T:L2spec}, we then show that these operators are invertible in $\kL(H^k(\R)^2)$, ${k=1,\, 2}$,  provided that~${f\in H^{k+1}(\R)}$, see Lemma~\ref{L:3}, and in 
  $\kL(H^{s-1}(\R)^2)$ when $f\in H^{s}(\R)$, see Lemma~\ref{L:4}.
  
  Concerning (ii), we prove  in Lemma~\ref{L:A1} that, given $f\in H^s(\R)$ and $\beta\in H^1(\R)^2$, the function~${\bD(f)[\beta]}$ belongs to~${H^1(\R)^2}$ and
 \begin{equation}\label{icomder}
 (\bD(f)[\beta])'=-\bD(f)^*[\beta'].
 \end{equation}
  This relation and the observation that the right side of \eqref{iINT} is a derivative enables us to essentially replace, for $f\in H^3(\R)$, Eq. \eqref{iINT} by
 \begin{equation}\label{iINT2}
\Big(\frac{1}{2}+ \bD(f)\Big)[\beta]=\sigma g,
\end{equation}
see Corollary~\ref{C:1}. 
 These properties, in particular Lemma~\ref{L:3} 
and the equivalence of \eqref{iINT} and \eqref{iINT2}, are then used to reformulate  the one-phase Stokes flow \eqref{STOKES} as the evolution problem~\eqref{NNEP1}, which has only $f$ as  unknown.
 Its well-posedness properties are summarized in Theorem~\ref{MT1} below.

Our second main result concerns the limit behavior  for $\mu^+\to 0$ of the two-phase quasistationary Stokes problem
\begin{subequations}\label{2STOKES}
\begin{equation}\label{probint2}
\left.
\begin{array}{rclll}
\mu^\pm\Delta w^\pm-\nabla q^\pm&=&0&\mbox{in $\Omega^\pm(t)$,}\\
\vdiv w^\pm&=&0&\mbox{in $\Omega^\pm(t)$,}\\{}
[w]&=&0&\mbox{on $\Gamma(t)$,}\\{}
[T_\mu(w, q)]\tilde\nu&=&-\sigma\tilde\kappa\tilde\nu&\mbox{on $\Gamma(t)$,}\\
(w^\pm, q^\pm)(x)&\to&0&\mbox{for $|x|\to\infty$,}\\
V_n&=& w^\pm\cdot\tilde \nu&\mbox{on $\Gamma(t)$}
\end{array}\right\}
\end{equation}
for $t>0$ and
 \begin{align}\label{IC2}
 f(0)= f^{(0)},
 \end{align}
 \end{subequations}
 with $\mu^-=\mu$ fixed.
In \eqref{probint2} it is again assumed that $\Gamma(t)$ is the graph of a function~$f(t)$, 
\[
\Omega^\pm(t):=\{x=(x_1,x_2)\in\R^2\,:\, x_2\gtrless f(t,x_1)\},
 \]
and $\tilde\nu$ is the unit exterior normal to~$\p\Omega^-(t)$. 
Moreover,  $w^\pm(t)$ and $ q^\pm(t)$ represent the velocity and pressure fields in $\Omega^\pm(t)$,  respectively, and~$[v]$ (respectively $[T_\mu(v, p)]$) 
is the jump of the velocity (respectively stress tensor) across the moving interface, see \eqref{defjump}  below.
We emphasize that the limit $\mu^+\to 0$ in the formulation \eqref{2STOKES} is singular because ellipticity of the underlying boundary value problem is lost in this limit.

 In \cite{MP2022},  we reformulated the two-phase Stokes problem~\eqref{2STOKES} as a nonlinear evolution equation for~$f$, see \eqref{NNEP2} below. 
 In Sections \ref{Sec:4.3} and \ref{Sec:4.4} of the present paper we prove that the right side of \eqref{NNEP2} has a limit for $\mu^+\to 0$, and the  limit is  the right side of \eqref{NNEP1}. 
In this sense, we show that the moving boundary problem~\eqref{STOKES} represents the ``regular limit'' of~\eqref{2STOKES} for $\mu^-=\mu$ and $\mu^+\to 0$. 
This property is used in Section~\ref{Sec:4.4} to  introduce the common formulation \eqref{NNEP}  that contains both evolution problems. 
 It reads
\begin{equation}\label{NNEP*}
 \frac{df}{dt}=\Phi(\mu^+,f),\quad t\geq 0,\qquad f(0)=f^{(0)},
 \end{equation}
where $\mu^+\geq0$ is viewed as a parameter.
 We point out that though this common  formulation has been derived from the Stokes flow equations under the assumption that~${f(t)\in H^3(\R)}$,
 the nonlinear and nonlocal operator $\Phi$ is well-defined when assuming only $f\in H^s(\R)$, ${s\in(3/2,2)}$,
  and this  allows us to consider \eqref{NNEP*} under these lower smoothness assumptions.
The regularity of the limit is now seen in the fact that $\Phi$ is smooth on  $[0,\infty)\times H^s(\R)$. 
For any fixed~${\mu^+>0}$ we investigated  the  problem \eqref{NNEP*}  in \cite{MP2022}. 
In particular, we showed in \cite[Theorem~1.1]{MP2022} that, given $f^{(0)}\in H^{s}(\R)$, there exists a 
unique maximal solution~$(f_{\mu^+},w_{\mu^+}^\pm,q_{\mu^+}^\pm)$   to~\eqref{2STOKES} such that
\begin{itemize}
\item[$\bullet$] $f_{\mu^+}=f_{\mu^+}(\cdot,  f^{(0)})\in {\rm C}([0,T_{+,\mu^+}),  H^{s}(\mathbb{R}))\cap {\rm C}^1([0,T_{+,\mu^+}), H^{s-1}(\mathbb{R})),$\\[-2ex]
\item[$\bullet$] $w_{\mu^+}^\pm(t)\in {\rm C}^2(\Omega^\pm(t))\cap {\rm C}^1(\overline{\Omega^\pm(t)})$, $q_{\mu^+}^\pm(t)\in {\rm C}^1(\Omega^\pm(t))\cap {\rm C}(\overline{\Omega^\pm(t)})$ 
for all ${t\in(0,T_{+,\mu^+})}$,\\[-2ex]
\item[$\bullet$] $ w_{\mu^+}^\pm(t)|_{\G(t)}\circ\Xi_{f(t)}\in H^2(\R)^2$ for all $t\in(0,T_{+,\mu^+})$,\\[-2ex]
\end{itemize}
where $T_{+,\mu^+}=T_{+,\mu^+}( f^{(0)})\in (0,\infty]$ is the maximal existence time  and $\Xi_{f(t)}(\xi):=(\xi, f(t,\xi))$ for~$\xi\in\R.$ 

 Our first main result is based on the fact that the properties of $\Phi(\mu^+,\cdot)$ and 
of its Fr\'echet derivative~$\p_f\Phi(\mu^+,\cdot)$ which were used to prove \cite[Theorem 1.1]{MP2022} remain valid also when~${\mu^+=0}$.

 \begin{thm}\label{MT1} Let  $s\in(3/2,2) $ be given.
Then, the following  statements hold true:
\begin{itemize}
\item[(i)]  {\em (Well-posedness)}  Given $f^{(0)}\in H^{s}(\mathbb{R})$, there exists a unique maximal solution~$(f,v,p)$   to \eqref{STOKES} such that
\begin{itemize}
\item[$\bullet$] $f=f(\cdot;f^{(0)})\in {\rm C}([0,T_+),  H^{s}(\mathbb{R}))\cap {\rm C}^1([0,T_+), H^{s-1}(\mathbb{R})),$
\item[$\bullet$] $v(t)\in {\rm C}^2(\Omega(t))\cap {\rm C}^1(\overline{\Omega(t)})$, $p(t)\in {\rm C}^1(\Omega(t))\cap {\rm C}(\overline{\Omega(t)})$ for all ${t\in(0,T_+)}$,
\item[$\bullet$] $ v(t)|_{\G(t)}\circ\Xi_{f(t)}\in H^2(\R)^2$ for all $t\in(0,T_+)$,
\end{itemize}
where $T_+=T_+( f^{(0)})\in (0,\infty]$ is the maximal existence time.
 Moreover,   the set
 $$\mathcal{M}:=\{(t, f^{(0)})\,:\, f^{(0)}\in H^s(\R),\,0< t<T_+( f^{(0)})\}$$ is open in~${(0,\infty)\times H^s(\R)}$,   
 and $[(t, f^{(0)})\longmapsto f(t; f^{(0)})]$  is a semiflow on $H^s(\R)$ which is smooth in~$\mathcal{M}$.\\[-2.2ex]

\item[(ii)]  {\em (Parabolic smoothing)} 
\begin{itemize}
\item[(iia)]  The map $[(t,\xi)\longmapsto  f(t,\xi)]:(0,T_+)\times\mathbb{R}\longrightarrow\mathbb{R}$ is a ${\rm C}^\infty$-function. \\[-2ex]
\item[(iib)] For any $k\in\N$, we have $f\in {\rm C}^\infty ((0,T_+), H^k(\mathbb{R})).$\\[-2ex]
\end{itemize}

\pagebreak

\item[(iii)]  {\em (Global existence)} If 
$$\sup_{[0,T]\cap [0,T_+( f^{(0)}))} \|f(t)\|_{H^s}<\infty$$
for each $T>0$, then $T_+(f^{(0)})=\infty.$
\end{itemize} 
\end{thm}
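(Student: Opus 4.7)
The plan is to derive Theorem~\ref{MT1} as an immediate consequence of the unified evolution formulation \eqref{NNEP*}, treating the one-phase problem as the boundary value $\mu^+=0$ of a smooth parameter family and running the same abstract parabolic machinery that drove the proof of \cite[Theorem~1.1]{MP2022} for $\mu^+>0$. The required reductions are already in place: Theorem~\ref{T:1}, Corollary~\ref{C:1} and the invertibility Lemmas~\ref{L:3}--\ref{L:4} identify \eqref{STOKES} with the Cauchy problem \eqref{NNEP*} at $\mu^+=0$ on $H^s(\R)$, while Sections~\ref{Sec:4.3} and \ref{Sec:4.4} package the one- and two-phase problems into a single nonlinearity $\Phi$ defined on $[0,\infty)\times H^s(\R)$ with values in $H^{s-1}(\R)$.

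The essential new checks are that the two structural properties of $\Phi(\mu^+,\cdot)$ exploited in \cite{MP2022} remain valid at $\mu^+=0$, namely (a) joint smoothness of $\Phi$ on $[0,\infty)\times H^s(\R)$ into $H^{s-1}(\R)$, and (b) the fact that for each $f_0\in H^s(\R)$ the linearization $\partial_f\Phi(0,f_0)$ generates an analytic semigroup, and has the corresponding maximal regularity property, on the relevant Sobolev scale, with estimates that are locally uniform in $f_0$. Property (a) follows by direct inspection of $\Phi(0,\cdot)$, which is assembled from the singular integral operators of the hydrodynamic single layer, the curvature expression $g$, and the inverse $(\tfrac12-\bD(f)^*)^{-1}$ that by Lemma~\ref{L:4} depends smoothly on $f\in H^s(\R)$; the identity \eqref{icomder} ensures that the passage from the density derivative $\beta'$ back to $\beta$ can be performed without a loss of regularity. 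Property (b) is where I expect the principal work to lie: one must revisit the localization and freezing-coefficients argument of \cite{MP2022} uniformly in $\mu^+\in[0,\mu_0]$, and verify that the principal symbol of $\partial_f\Phi(0,f_0)$ has the same sectorial structure as in the two-phase case, with $(\tfrac12-\bD(f_0)^*)^{-1}$ taking the place of its two-phase counterpart; the fact that all limits in $\mu^+\to 0$ are under control has already been prepared in Sections~\ref{Sec:4.3}--\ref{Sec:4.4}.

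Once (a) and (b) are established, claim (i) follows from the standard quasilinear / fully nonlinear parabolic well-posedness theorem in the framework of Amann, Lunardi, or Escher--Simonett, delivering unique local maximal solutions, the semiflow property on the open set $\mathcal{M}$, and smoothness of the semiflow in both arguments on $\mathcal{M}$. Claim (iii) is the usual blow-up alternative for maximal solutions to abstract parabolic problems: finite $T_+$ forces $\|f(t)\|_{H^s}$ to become unbounded along a sequence $t\nearrow T_+$. For the parabolic smoothing in (ii), the natural tool is the Angenent--Escher--Simonett parameter trick: the family $f_{\lambda,\eta}(t,\xi):=f(\lambda t,\xi+\eta t)$, defined for $(\lambda,\eta)$ in a small neighbourhood of $(1,0)$ in $(0,\infty)\times\R$, solves a perturbed evolution problem whose right-hand side depends smoothly on $(\lambda,\eta)$. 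Applying the smoothness of the semiflow from (i) to this family and differentiating at $(\lambda,\eta)=(1,0)$ successively yields time regularity and, via translation invariance in $\xi$, spatial regularity of $f$. Iterating the argument gives (iib), and (iia) then follows by Sobolev embedding.
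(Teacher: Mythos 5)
Your proposal is correct and follows essentially the same route as the paper: reduce \eqref{STOKES} to the evolution problem \eqref{NNEP*} at $\mu^+=0$, verify that $\Phi$ is smooth into $H^{s-1}(\R)$ and that $\partial_f\Phi(0,f)$ generates an analytic semigroup (with Lemma~\ref{L:4} replacing the corresponding two-phase invertibility result used in \cite{MP2022}), and then invoke Lunardi's fully nonlinear parabolic theory together with the Angenent-type parameter trick for the smoothing claims and the standard blow-up alternative for (iii). The only presentational difference is that you anticipate the generator property at $\mu^+=0$ as the principal new work, whereas the paper points out that the arguments of \cite[Theorem~6.1]{MP2022} carry over verbatim once Lemma~\ref{L:4} is in hand; both amount to the same verification.
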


 Observe, in particular, that by Theorem~\ref{MT1}~(iib) we have $f(t)\in H^3(\R)$ for~${t>0}$, which justifies the assumptions that were made when deriving the reformulation~\eqref{NNEP1}.
  Thus, the solutions we construct correspond to one-phase Stokes flows, starting from initial domains whose boundaries might have a curvature in distribution sense only.

 Our second main result gives a precise formulation of the limit result announced above. 
 We recall the notation $(f_{\mu^+}(\cdot; f^{(0)}),w^\pm_{\mu^+},q^\pm_{\mu^+})$ for solutions to the two-phase problem \eqref{2STOKES}.
 \begin{thm}\label{MT2} Let  $s\in(3/2,2) $ and $ f^{(0)}\in H^{s}(\mathbb{R})$ be given.
  Let further $(f(\cdot; f^{(0)}),v,p)$  denote the maximal solution to \eqref{STOKES} identified in Theorem~\ref{MT1} and choose $T\in (0,T_+( f^{(0)}))$.
Then,  there exist constants $\varepsilon>0$ and $M>0$ such that for all $\mu^+\in (0, \varepsilon]$, we have $T<T_{+,\mu^+}( f^{(0)})$ and
 \[
\big\|f(\cdot ; f^{(0)})-f_{\mu^+}(\cdot;f^{(0)})\big\|_{{\rm C}([0,T], H^s(\R))}+\Big\|\frac{d}{dt}\big(f(\cdot ; f^{(0)})-f_{\mu^+}(\cdot; f^{(0)})\big)\Big\|_{{\rm C}([0,T], H^{s-1}(\R))}\leq M\mu^+.
  \]
\end{thm}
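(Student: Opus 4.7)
The plan is to view both problems as instances of the common evolution equation \eqref{NNEP*} on $H^s(\R)$, exploit the joint smoothness of $\Phi$ on $[0,\infty)\times H^s(\R)$, and derive the bound via a parabolic linearization around the one-phase solution $f$.

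First I would show that for sufficiently small $\mu^+$ the two-phase solution $f_{\mu^+}$ exists on $[0,T]$ and stays close to $f$ in $C([0,T],H^s(\R))$. Since $\Phi$ is smooth in both arguments and $\p_f\Phi(0,f(t))$ generates an analytic semigroup on $H^{s-1}(\R)$ with domain $H^s(\R)$ for every $t\in[0,T]$ --- the very mechanism behind Theorem~\ref{MT1} and its two-phase counterpart from \cite{MP2022} --- the standard continuous-dependence theory for parametrized quasilinear parabolic problems yields an $\varepsilon_0>0$ so that $T<T_{+,\mu^+}(f^{(0)})$ for every $\mu^+\in[0,\varepsilon_0]$, together with $\|f_{\mu^+}-f\|_{C([0,T],H^s(\R))}\to 0$ as $\mu^+\to 0$.

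For the quantitative rate, set $g_{\mu^+}:=f_{\mu^+}-f$ (so $g_{\mu^+}(0)=0$); by inserting $\pm\Phi(\mu^+,f)$ one rewrites the evolution of $g_{\mu^+}$ as
\[
\frac{dg_{\mu^+}}{dt}=A_{\mu^+}(t)\,g_{\mu^+}+\mu^+\,R_{\mu^+}(t),
\]
with
\[
A_{\mu^+}(t):=\int_0^1\p_f\Phi\bigl(\mu^+,f(t)+s\,g_{\mu^+}(t)\bigr)\,ds,\qquad R_{\mu^+}(t):=\int_0^1\p_{\mu^+}\Phi\bigl(s\mu^+,f(t)\bigr)\,ds.
\]
Joint smoothness of $\Phi$ makes $R_{\mu^+}$ bounded in $L^\infty([0,T],H^{s-1}(\R))$ uniformly in $\mu^+\in[0,\varepsilon_0]$, so the source has norm of order $\mu^+$; the smallness of $g_{\mu^+}$ in $C([0,T],H^s(\R))$ established in the preceding step, combined with continuity of $\p_f\Phi$, then implies that $\{A_{\mu^+}(t)\}_{t\in[0,T]}$ is a small perturbation of $\{\p_f\Phi(0,f(t))\}_{t\in[0,T]}$ and still generates analytic semigroups on $H^{s-1}(\R)$ with uniform maximal-regularity constants on $[0,T]$.

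Applying parabolic maximal regularity (equivalently, a parabolic Gronwall inequality) to the linear non-autonomous Cauchy problem for $g_{\mu^+}$ with zero initial data therefore gives $\|g_{\mu^+}\|_{C([0,T],H^s(\R))}\leq M_1\,\mu^+$, and substituting this back into the identity $\dot g_{\mu^+}=\Phi(\mu^+,f_{\mu^+})-\Phi(0,f)$ together with the local Lipschitz continuity of $\Phi:[0,\varepsilon_0]\times H^s(\R)\to H^{s-1}(\R)$ yields the companion estimate $\|\dot g_{\mu^+}\|_{C([0,T],H^{s-1}(\R))}\leq M_2\,\mu^+$. The main obstacle I anticipate is obtaining the uniform-in-$\mu^+$ maximal-regularity constant for $A_{\mu^+}$: while semigroup generation for each fixed $\mu^+$ is secured by the analyses behind Theorem~\ref{MT1} and \cite[Theorem 1.1]{MP2022}, the constant $M_1$ must be controlled uniformly as $\mu^+\to 0$. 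This reduces to uniform sectoriality of the linearization, which in turn follows from joint continuity of $\p_f\Phi:[0,\infty)\times H^s(\R)\to\kL(H^s(\R),H^{s-1}(\R))$ on compact subsets of the parameter space.
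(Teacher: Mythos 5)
Your proposal is correct and rests on the same two pillars the paper uses: the joint smoothness $\Phi\in{\rm C}^\infty([0,\infty)\times H^s(\R),H^{s-1}(\R))$ from \eqref{regPhi}, and the fact that $\p_f\Phi(\mu^+,f)$ generates an analytic semigroup in $\kL(H^{s-1}(\R))$ for every $(\mu^+,f)\in[0,\infty)\times H^s(\R)$. The difference is one of packaging. The paper observes that once these two facts are in place the hypotheses of Lunardi's abstract continuous-dependence result for fully nonlinear parabolic problems, \cite[Theorem~8.3.2]{L95}, are verified, and invokes that theorem as a black box; you instead re-derive its content in this instance by writing $g_{\mu^+}=f_{\mu^+}-f$ as the solution of a linear non-autonomous Cauchy problem with zero initial data and an $O(\mu^+)$ source, and then appealing to maximal regularity together with a parabolic Gronwall argument. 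The main obstacle you flag --- obtaining maximal-regularity constants for the linearized operators $A_{\mu^+}(t)$ that are uniform in $\mu^+\in[0,\varepsilon_0]$ and $t\in[0,T]$ --- is exactly what Lunardi's theorem settles internally by a fixed-point argument in time-weighted H\"older spaces; carrying this out by hand requires, as you note, continuity of the resolvent estimates on the precompact set $\{(\mu^+,f(t)+s\,g_{\mu^+}(t))\}$, which works but amounts to re-proving the cited abstract result. One small terminological slip: \eqref{NNEP*} is a fully nonlinear, not a quasilinear, parabolic equation, so the relevant continuous-dependence theory is the fully nonlinear one in Lunardi's Chapter 8 rather than the quasilinear framework you name.
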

 
 The proofs of the main results are presented in Section~\ref{Sec:4.5} which concludes the paper.

 \section{The Stokes boundary value problem in a fixed  domain}\label{Sec:2} 

In this section we fix $f\in H^3(\R)$ and we consider the Stokes boundary value problem  
\begin{equation}\label{SBVP}
\left.
\begin{array}{rclll}
\mu\Delta v-\nabla p&=&0&\mbox{in $\Omega$,}\\
\vdiv v&=&0&\mbox{in $\Omega$,}\\{}
T_\mu(v, p)\tilde\nu&=&\sigma\tilde\kappa\tilde\nu&\mbox{on $\Gamma$,}\\
(v, p)(x)&\to&0&\mbox{for $|x|\to\infty$,}
\end{array}\right\}
\end{equation}
where $\0:=\{x=(x_1,x_2)\in\R^2\,:\, x_2<f(x_1)\}$ and $\G:=\{(\xi, f(\xi))\,:\, \xi\in\R\}$.
The main goal is to show that \eqref{SBVP} has a unique solution $(v,p)$, see Theorem~\ref{T:1} below.

We start by introducing  some notation.
Since $\Gamma$ is a  graph over $\R$, it is natural to view $\Gamma$ as is the image of $\R$ under the diffeomorphism $\Xi:=\Xi_f:=({\rm id}_\mathbb{R},f).$
Let now $\nu$ and $\tau$ denote  the componentwise pull-back under~$\Xi$ of the unit normal~$\tilde\nu$ on~$\Gamma$ exterior to~$\Omega$ and of the unit tangent vector $\tilde\tau$ to~$\Gamma$,  i.e.
 \begin{align}\label{nutau}
\nu:=\nu(f):=\frac{1}{\omega}(-f',1)^\top,\qquad\tau:=\tau(f):=\frac{1}{\omega}(1,f')^\top,\qquad \omega:=\omega(f):=(1+f'^2)^{1/2}.
 \end{align}
 Observe that the pull-back~$\kappa:=\omega^{-3}f''\in H^1(\R)$ of the curvature $\tilde\kappa$ satisfies 
 \begin{equation}\label{fundfor}
 \omega\kappa\nu=g',
 \end{equation}
 where  $g:=g(f)$ is given by
\begin{equation}\label{defgn}
  g:=(g_1,g_2)^\top:=(\omega^{-1}-1,\omega^{-1}f')^\top=\Big(-\frac{{f'}^2}{\omega+\omega^2},\frac{f'}{\omega}\Big)^\top. 
\end{equation}

We further recall that  the fundamental solutions $(\cU^k,\cP^k):\mathbb{R}^2\setminus\{0\}\longrightarrow\mathbb{R}^2\times\mathbb{R},$  $k=1,\,2,$
 where $\cU^k=(\cU^k_1, \cU^k_2)^\top$,  to the Stokes equations
\begin{equation}\label{inhstosy}
 \left.\begin{array}{rllll}
\mu \Delta U-\nabla P&=&0,\\[1ex]
\vdiv U&=&0
\end{array}\right\}\qquad\text{in  $\R^2\setminus\{0\}$}
\end{equation}
are given by  
\begin{equation}\label{fundup}
\begin{aligned}
\cU_j^k(y)&=-\frac{1}{4\pi\mu}\left(\delta_{jk}\ln\frac{1}{|y|}+\frac{y_jy_k}{|y|^2}\right),\quad j=1,\,2,\\[1ex]
\cP^k(y)&=-\frac{1}{2\pi}\frac{y_k}{|y|^2} 
\end{aligned}
\end{equation}
for $ y=(y_1,y_2)\in\R^2\setminus\{0\}$, see \cite{Lad63}.

Finally,   defining the mapping $r:=(r^1,r^2):\R^2\longrightarrow\R^2$ by the formula
\begin{equation}\label{rxs}
r:=r(\xi,s):=(\xi-s,f(\xi)-f(s)), \quad (\xi,s)\in\R^2,
\end{equation}
we introduce the double layer potential  $\bD(f)$ for the Stokes equations associated to the hypersurface $\G$ and its $L_2$-adjoint $\bD(f)^\ast$ by the formulas
\begin{equation}\label{defD}
\begin{aligned}
    \bD(f)[\beta](\xi)&:=\frac{1}{\pi}\PV\int_\R\frac{r_1 f'- r_2}{ |r|^4}
   \begin{pmatrix}
r_1^2&r_1 r_2\\
r_1 r_2& r_2^2
\end{pmatrix}\beta\,ds,\\
\bD(f)^\ast[\beta](\xi)&:=\frac{1}{\pi}\PV\int_\R\frac{-r_1 f'(\xi)+ r_2}{|r|^4}
\begin{pmatrix}
r_1^2&r_1 r_2\\
 r_1r_2&r_2^2
\end{pmatrix}\beta\,ds 
\end{aligned}
\end{equation}
for $\beta=(\beta_1,\beta_2)^\top\in L_2(\R)^2$ and $\xi\in\R$. 

 In \eqref{defD}, the integrals are absolutely convergent whenever $f'$ is H\"older continuous. 
 We prefer the definition as principal value integral because we will consider $f\in {\rm BUC}^1(\RRM)$ later. 
Given~${k\in\N}$, ${\rm BUC}^k(\RRM)$ is the Banach space consisting of functions with bounded and uniformly continuous derivatives up to order $k$.
  It is well-known that  the intersection of all these spaces, denoted by ${\rm BUC}^\infty(\RRM)$,
 is a dense subspace of ${\rm BUC}^k(\RRM)$   for each $k\in\N$.
 
Both operators $\bD(f)$, $\bD(f)^*$ can be expressed in terms  of the family~${\{B_{n,m}^0(f)\,:\, n,\, m\in\N\}}$
of singular integral operators introduced in \cite{MBV18,MBV19}, see \eqref{defB0} and~\eqref{DFB} below.
We  now introduce these operators in a more general context.
More precisely, given~${n,\,m\in\N}$ and  Lipschitz continuous  functions~${a_1,\ldots, a_{m},\, b_1, \ldots, b_n:\mathbb{R}\longrightarrow\mathbb{R}}$,  we define
\begin{equation}\label{BNM}
 B_{n,m}(a_1,\ldots, a_m)[b_1,\ldots,b_n,h](\xi):=\frac{1}{\pi}\PV\int_\mathbb{R}  \frac{h(\xi-\eta)}{\eta}\cfrac{\prod_{i=1}^{n}\big(\delta_{[\xi,\eta]} b_i /\eta\big)}{\prod_{i=1}^{m}\big[1+\big(\delta_{[\xi,\eta]}  a_i /\eta\big)^2\big]}\, d\eta,
\end{equation}
where $\delta_{[\xi,\eta]}u:=u(\xi)-u(\xi-\eta)$. 
 For brevity we set
\begin{equation}\label{defB0}
B^0_{n,m}(f):=B_{n,m}(f,\ldots  ,f)[f,\ldots,f,\cdot].
\end{equation}
We note that $B^0_{0,0}=H$, where $H$ denotes the Hilbert transform.

We now prove that  the boundary value problem \eqref{SBVP} is uniquely solvable and that the solution is given by the  hydrodynamic single layer potential with a suitable density function~$\beta$.
\begin{thm}\label{T:1}
 Given $f\in H^3(\R)$, the  boundary value problem \eqref{SBVP} has a  unique solution~$(v,p)$ such that
 \[
v\in {\rm C}^2(\0)\cap {\rm C}^1 (\ov\0),\quad  p\in {\rm C}^1(\0)\cap {\rm C} (\ov\0),\quad   v|_\G \circ\Xi\in H^2(\R)^2.
 \]
 Moreover, letting $\beta=(\beta_1,\beta_2)^\top\in H^2(\R)^2$ denote the unique solution to the equation
  \begin{equation}\label{invertcom}
\Big(\frac{1}{2}- \bD(f)^*\Big)[\beta']=\sigma g',
\end{equation}
where $g\in H^2(\R)^2$ is defined in \eqref{defgn}, we have
\begin{equation}\label{Solutions}
  \begin{aligned}
v(x)&:=\displaystyle\int_\mathbb{R}\partial_s\big(\clu^k(x-(s,f(s)))\big)\beta_k(s)\,ds,\\[1ex]
p( x)&:=\displaystyle-\int_\mathbb{R}\clp^k( x-(s,f(s)))\beta_k'(s)\,ds=\displaystyle\int_\mathbb{R}\p_s\big(\clp^k( x-(s,f(s)))\big)\beta_k(s)\,ds,\quad x\in\Omega.
\end{aligned}
\end{equation}

\end{thm}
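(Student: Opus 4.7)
The plan is to construct $(v,p)$ explicitly as a hydrodynamic single layer potential, reduce the traction boundary condition to the integral equation \eqref{invertcom}, and derive uniqueness from a standard energy identity. The invertibility results on $\tfrac12\pm\bD(f)$ and $\tfrac12\pm\bD(f)^*$ announced in the introduction (Lemma~\ref{L:3}), together with the commutator identity \eqref{icomder} and the equivalence between \eqref{iINT} and \eqref{iINT2} (Corollary~\ref{C:1}), do most of the work.

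\textbf{Step 1: Solving for the density $\beta$.} Because $f\in H^3(\R)$ and $H^2(\R)$ is a Banach algebra that behaves well under composition with smooth functions of a bounded argument, both $\omega^{-1}-1$ and $\omega^{-1}f'$ lie in $H^2(\R)$, so $g\in H^2(\R)^2$ and $g'\in H^1(\R)^2$. Since $\tfrac12+\bD(f)$ is an isomorphism of $H^2(\R)^2$ by Lemma~\ref{L:3}, I set
\[
\beta:=\sigma\Big(\tfrac12+\bD(f)\Big)^{-1}[g]\in H^2(\R)^2.
\]
Differentiating $(\tfrac12+\bD(f))[\beta]=\sigma g$ and invoking \eqref{icomder} gives $(\tfrac12-\bD(f)^*)[\beta']=\sigma g'$, i.e.\ \eqref{invertcom}. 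Uniqueness of $\beta'$ in $H^1(\R)^2$ is immediate from the injectivity of $\tfrac12-\bD(f)^*$ on $H^1(\R)^2$, and $\beta$ itself is then determined by the embedding $H^2(\R)\hookrightarrow C_0(\R)$.

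\textbf{Step 2: Construction and regularity of $(v,p)$.} Define $(v,p)$ by \eqref{Solutions}. Since $\beta\in H^2(\R)^2\hookrightarrow C_0(\R)^2$, integration by parts in $s$ represents $v$ and $p$ as the classical hydrodynamic single layer potentials with density $-\beta'\in H^1(\R)^2$. The differential equations $\mu\Delta v=\nabla p$ and $\vdiv v=0$ in $\Omega$, together with the interior regularity $v\in C^2(\Omega)\cap C^1(\overline\Omega)$ and $p\in C^1(\Omega)\cap C(\overline\Omega)$, follow from the properties of $(\cU^k,\cP^k)$; decay at infinity holds because $\int_\R\beta_k'\,ds=0$ kills the logarithmic leading order of $\cU^k$, while $\cP^k$ already decays like $1/|y|$. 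The trace property $v|_\G\circ\Xi\in H^2(\R)^2$ is a consequence of standard mapping properties of the single layer trace on graphs of $H^3$-functions, together with the identity $v|_\G\circ\Xi=(\tfrac12+\bD(f))^{-1}[\sigma g]-\beta/(\text{const})$ obtained from Step 1 and the $\tfrac12$-jump.

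\textbf{Step 3: Boundary condition and uniqueness.} The jump relation for the hydrodynamic single layer with density $-\beta'$, expressed in terms of the operator $\bD(f)^*$ from \eqref{defD}, reads
\[
\omega\cdot\bigl(T_\mu(v,p)\tilde\nu\bigr)\big|_\G\circ\Xi=\Big(\tfrac12-\bD(f)^*\Big)[\beta'],
\]
the sign being fixed by $\tilde\nu$ pointing out of $\Omega$. Combining this with \eqref{invertcom} and the fundamental relation \eqref{fundfor} yields $(T_\mu(v,p)\tilde\nu)|_\G\circ\Xi=\sigma\kappa\nu$, which is the pull-back of the third equation in \eqref{SBVP}. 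For uniqueness, the difference $(v,p)$ of two solutions satisfies homogeneous Stokes equations, vanishes at infinity, and carries zero traction on $\G$; a truncated energy identity gives $\nabla v+(\nabla v)^\top=0$ in $\Omega$, so $v$ is an infinitesimal rigid motion, forced to zero by decay, and consequently $p\equiv 0$.

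\textbf{Main obstacle.} The delicate point is the rigorous derivation of the jump relation used in Step 3 at the low regularity $\G\in C^{2,1/2}$ afforded by $f\in H^3(\R)$: the classical Stokes layer theory in Ladyzhenskaya's monograph assumes smoother interfaces, so one must either run an approximation argument (truncating and mollifying $f$ and passing to the limit using the operator bounds encoded in the family $B_{n,m}(\cdot)$) or exploit the explicit singular integral representation \eqref{defD} directly. Bookkeeping of the factors $\omega$ and of the identity $\omega\kappa\nu=g'$ is what makes the jump formula pair cleanly with \eqref{invertcom}; once this is in place, the rest of the proof is essentially mechanical.
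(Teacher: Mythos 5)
Your Steps 1 and 2 follow the paper's route closely: solving for $\beta$ via Lemma~\ref{L:3}, Lemma~\ref{L:A1}, and Corollary~\ref{C:1}, then representing $(v,p)$ by the differentiated single-layer potential and reducing the traction condition $\eqref{SBVP}_3$ to \eqref{invertcom} through the jump relations. Two points, however, need attention.

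First, the uniqueness argument in Step 3 has a genuine gap. The theorem only assumes $(v,p)(x)\to 0$ as $|x|\to\infty$; there is no quantitative decay of $\nabla v$ or $p$ in $\Omega$. A truncated energy identity on $\Omega\cap B_R$ produces a far-field boundary term of the form $\int_{\Omega\cap\partial B_R} v\cdot T_\mu(v,p)\tilde\nu\,dS$, and with only pointwise vanishing of $(v,p)$ one cannot conclude that this term tends to zero as $R\to\infty$. The paper avoids the energy identity entirely: it extends the difference of two solutions by zero to $\Omega^+:=\R^2\setminus\ov\Omega$, recognizes the result as a solution of the \emph{two-phase} transmission problem \eqref{bvpaux} with $[T_1(w,q)]\tilde\nu=0$ and velocity jump $\gamma=-w^-|_\Gamma\circ\Xi$, invokes the known uniqueness of that problem from \cite[Proposition 2.1]{MP2022}, and then uses the trace formula $w^\pm|_\Gamma\circ\Xi=\big(\pm\tfrac12-\bD(f)\big)[\gamma]$ together with the invertibility of $\tfrac12-\bD(f)$ in $\kL(L_2(\R)^2)$ (Theorem~\ref{T:L2spec}) to conclude $\gamma=0$. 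This potential-theoretic route is precisely what makes the weak decay assumption sufficient; your energy argument would require strengthening the hypotheses.

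Second, the identity you quote in Step 2, $v|_\G\circ\Xi=(\tfrac12+\bD(f))^{-1}[\sigma g]-\beta/(\text{const})$, is not the correct trace relation and is not how the paper obtains $v|_\G\circ\Xi\in H^2(\R)^2$. The double layer potential $\bD(f)$ governs the jump of the single-layer's \emph{normal stress}, not of the velocity trace itself, and there is no such linear-in-$\beta$ correction term. The paper instead computes $v|_\Gamma\circ\Xi$ explicitly via the formulas \eqref{vgamma} in terms of the operators $B^0_{n,m}(f)$ (equivalently $v|_\Gamma\circ\Xi=\tfrac{\sigma}{\mu}\bV(f)[\beta]$ with $\bV(f)$ defined in \eqref{trv}), and the $H^2$ membership follows from $B^0_{n,m}(f)\in\kL(H^2(\R))$, i.e.\ Lemma~\ref{L:MP0}~(iv). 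Your remark in the final paragraph about the low-regularity jump relations is accurate; the paper's way of handling it is exactly the route you anticipate, namely working directly with the singular integral representations and the $B_{n,m}$ calculus rather than invoking classical layer-potential theory on smooth interfaces.
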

\begin{proof}
The unique solvability of  Eq. \eqref{invertcom} is established in Section~\ref{Sec:4} and is taken for granted in this proof. 
We divide  the proof in two steps.\medskip

\noindent{\em Step 1: Uniqueness.}
It suffices to show that the homogeneous boundary value problem \eqref{SBVP} (with the right side of \eqref{SBVP}$_3$ set to be  zero) has only the trivial solution. 
Let thus $(v,p) $ be a solution to the homogeneous system \eqref{SBVP} with   regularity as prescribed above.
We then set~${\0^-:=\0}$, $\0^+:=\R^2\setminus\ov \0$, and we  define~${(w^\pm,q^\pm):\0^\pm\longrightarrow \R^2\times\R}$ by  
\[
(w^-,w^+):=(\mu v,0)\qquad\text{and}  \qquad (q^-,q^+):=(p,0).
\]
Clearly, it holds
\[
w^\pm\in {\rm C}^2(\0^\pm)\cap {\rm C}^1 (\ov{\0^\pm}),\quad  q^\pm\in {\rm C}^1(\0^\pm)\cap {\rm C} (\ov{\0^\pm}),\quad   w^\pm|_\G \circ\Xi\in H^2(\R)^2.
 \]
Moreover, it can be easily checked that $(w^\pm,q^\pm)$ solves the boundary value problem
\be\label{bvpaux}
\left.\begin{array}{rcll}
\Delta w^\pm-\nabla q^\pm&=&0&\mbox{in $\Omega^\pm$,}\\
\vdiv w^\pm&=&0&\mbox{in $\Omega^\pm$,}\\{}
[w]&=&\gamma\circ\Xi^{-1}&\mbox{on $\Gamma$,}\\
{}[T_1(w,q)](\nu\circ \Xi^{-1})&=&0&\mbox{on $\Gamma$,}\\
(w^\pm,q^\pm)(x)&\to&0&\mbox{for $|x|\to\infty$,}
\end{array}\right\}
\ee
with $\gamma:=-w^-|_\Gamma\circ\Xi\in H^2(\R)^2$.
 Given $z^\pm\in  {\rm C} (\ov{\0^\pm}),$ we define $[z]$ as being the jump
 \begin{equation}\label{defjump}
 [z] (x):=z^+(x)-z^-(x),\qquad x\in\Gamma.
\end{equation}
According to \cite[Proposition 2.1]{MP2022}, the system \eqref{bvpaux} has a unique solution.
Moreover, we have  
\begin{equation}\label{eqsupi}
w^\pm|_\Gamma\circ\Xi=\Big(\pm\frac{1}{2}-\bD(f)\Big)[\gamma],
\end{equation}
see \cite[Lemma A.1]{MP2022}.
Since $w^+=0$ and $1/2-\bD(f)\in\kL(L_2(\R)^2)$ is invertible, see~Theorem~\ref{T:L2spec} below, we conclude that $\gamma=0$. 
Consequently $(w^\pm,q^\pm)$ is the trivial solution and this establishes the uniqueness claim.\medskip

\noindent{\em Step 2: Existence.} We are going to verify that $(v,p)$ from \eqref{Solutions} with $\beta$ from \eqref{invertcom} has the announced regularity and satisfies \eqref{SBVP}.
Recalling \eqref{fundup}, we have
  \begin{equation}\label{stokesdiff}
 \begin{aligned}
     \partial_1\clu^1(y)&=\frac{1}{4\pi\mu |y|^4}\begin{pmatrix} y_1(y_1^2-y_2^2)\\[1ex]y_2(y_1^2-y_2^2)\end{pmatrix},&               \partial_1\clp^1(y)&=\frac{y_1^2-y_2^2}{2\pi|y|^4},\\[1ex]
    \partial_2\clu^1(y)&=\frac{1}{4\pi\mu |y|^4}\begin{pmatrix} y_2(y_2^2+3y_1^2)\\[1ex] y_1(y_2^2-y_1^2)\end{pmatrix},&          \partial_2\clp^1(y)&=\frac{2y_1y_2}{ 2\pi|y|^4},\\[1ex]
    \partial_1\clu^2(y)&=\frac{1}{4\pi\mu |y|^4}\begin{pmatrix} y_2(y_1^2-y_2^2)\\[1ex]  y_1(y_1^2+3y_2^2)\end{pmatrix},&         \partial_1\clp^2(y)&=\frac{2y_1y_2}{2\pi|y|^4},\\[1ex]
     \partial_2\clu^2(y)&=\frac{1}{4\pi\mu |y|^4}\begin{pmatrix} y_1(y_2^2-y_1^2)\\[1ex] y_2(y_2^2-y_1^2)\end{pmatrix},&          \partial_2\clp^2(y)&=\frac{y_2^2-y_1^2}{ 2\pi|y|^4}
 \end{aligned}
 \end{equation}
 for $y\neq 0$. 
A direct consequence of \eqref{Solutions} is that $(v,p)$ is defined as an integral of the form
\[(v,p)(x)=\int_\R K(x,s)\beta(s)\,ds,\qquad x\in\0,\]
where, for every $\alpha\in\N^2,$ we have $\partial^\alpha_x K(x,s)=O(s^{-1})$ for $|s|\to\infty$ and locally uniformly  in~${x\in \0}$.
 This shows that $v$ and $p$ are well-defined by \eqref{Solutions}, and that integration and differentiation with respect to $x$ may be interchanged. 
Hence, $(v,p)\in {\rm C}^\infty(\0,\R^2\times\R)$,   and, since~$\p_j(\cU^{k},\cP^{k}),$ $j=1,\, 2$, solve \eqref{inhstosy},  we deduce that $(v,p)$ is 
a solution to \eqref{SBVP}$_1$-\eqref{SBVP}$_2$.

In view of \cite[Lemma A.1]{BM22} it holds that  $p\in {\rm C}(\ov \0)$ and
\begin{equation}\label{tracep}
 p|_\G\circ\Xi=\frac{B_{0,1}^0(f)[\beta_1']+B_{1,1}^0(f)[\beta_2']}{2}-\frac{\beta'\cdot\nu}{2\omega}.
\end{equation}
 Given $\phi\in H^1(\R)$, let~$Z_j[\phi]:\0\longrightarrow\R$, $j=0,\ldots,3,$ be given by
 \[
 Z_j[\phi](x):= \int_\R \frac{(x_1-s)^{3-j}(x_2-f(s))^j}{((x_1-s)^2+(x_2-f(s))^2)^2}\phi(s)\,ds,  \quad x\in\0.
 \]
Since
\[
\p_i v(x)=-\int_\mathbb{R}\partial_i\clu^k(x-(s,f(s)))\beta_k'(s)\,ds,\qquad i=1,\, 2, \, x\in\0,
\]
we obtain, due to \eqref{stokesdiff}, the following formulas
\begin{align*}
&\p_1v_1=-\p_2v_2=-\frac{(Z_0-Z_2)[\beta_1']+(Z_1-Z_3)[\beta_2']}{4\pi\mu },\\[1ex]
&\p_2v_1=-\frac{(Z_3+3Z_1)[\beta_1']+(Z_2-Z_0)[\beta_2']}{4\pi\mu },\\[1ex]
&\p_1v_2=-\frac{(Z_1-Z_3)[\beta_1']+(Z_0+3Z_2)[\beta_2']}{4\pi\mu }.
\end{align*}
Since  $ Z_j[\phi]\in {\rm C}(\overline\Omega)$, see the proof of \cite[Lemma A.1]{MP2021}, we obtain in view of the latter identities that~${v\in {\rm C}^1(\ov \0)}$.
Moreover, the formula derived in the proof of \cite[Lemma A.1]{MP2021} for the traces~${Z_j[\phi]|_\Gamma}$, $j=0,\ldots,3,$ leads us to
\begin{equation}\label{tracenv}
\begin{aligned}
& \p_1v_1|_\G\circ \Xi=-\frac{(B_{0,2}^0(f)-B_{2,2}^0(f))[\beta_1']+(B_{1,2}^0(f)-B_{3,2}^0(f))[\beta_2']}{4\mu}-\frac{f'\beta'\cdot \tau}{2\mu\omega^3},\\[1ex]
&\p_2v_1|_\G\circ \Xi=-\frac{(B_{3,2}^0(f)+3B_{1,2}^0(f))[\beta_1']+(B_{2,2}^0(f)-B_{0,2}^0(f))[\beta_2']}{4\mu }+\frac{\beta'\cdot \tau}{2\mu\omega^3},\\[1ex]
&\p_1v_2|_\G\circ \Xi=-\frac{(B_{1,2}^0(f)-B_{3,2}^0(f))[\beta_1']+(B_{0,2}^0(f)+3B_{2,2}^0(f))[\beta_2']}{4\mu }-\frac{f'^2\beta'\cdot \tau}{2\mu\omega^3}.
\end{aligned}
\end{equation}

It is now a matter of direct computation to infer from  \eqref{defT}, \eqref{fundfor}, \eqref{defD}, \eqref{tracep}, and \eqref{tracenv} 
that the equation $\eqref{SBVP}_3$ is equivalent to \eqref{invertcom},
 hence also  $\eqref{SBVP}_3$ is satisfied.
It remains to check that the far  field boundary condition $\eqref{SBVP}_4$ holds true. 
To this end we infer directly from \cite[Lemma A.4]{BM22} and  \eqref{Solutions}$_2$    that $p$ vanishes at infinity.
Moreover, since by \eqref{Solutions}$_1$ we have
\[
v(x)=\frac{1}{4\pi\mu}
\int_\mathbb{R}\frac{1}{|R|^2}
\left(
\begin{array}{ccc}
-R_2^2&& R_1R_2\\
 R_1R_2&& R_2^2
\end{array}\right)\beta'(s)\,ds
-\frac{1}{4\pi\mu}\int_\mathbb{R}\frac{R_1+f'(s)R_2}{|R|^2}\beta(s)\,ds\]
for $x\in\0$, where $R=(R_1,R_2)$ is given by
\[
 R:=R(s,x):=(x_1-s,x_2-f(s)), \qquad s\in\R,\, x\in\0,
\]
we infer from \cite[Lemma A.4]{BM22} and \cite[Lemma B.2]{MP2021} that also $v$ vanishes at infinity.

In order to show that $v|_\G\circ\Xi\in H^2(\R)^2$ we conclude from \eqref{Solutions}$_1$, \eqref{stokesdiff}, and the formula derived in the proof of \cite[Lemma A.1]{MP2021} for the traces~${Z_j[\phi]|_\Gamma}$, $j=0,\ldots,3,$ that 
\begin{equation}\label{vgamma}
\begin{aligned}
 v_1|_\Gamma\circ\Xi&=\frac{(B_{2,2}^0(f)-B_{0,2}^0(f))[\beta_1-f'\beta_2]-B_{1,2}^0(f)[3f'\beta_1+\beta_2] -B_{3,2}^0(f)[f'\beta_1-\beta_2]}{4\mu},\\[1ex]
v_2|_\Gamma\circ\Xi&=\frac{B_{0,2}^0(f)[f'\beta_1-\beta_2]+(B_{3,2}^0(f)-B_{1,2}^0(f))[\beta_1-f'\beta_2]-B_{2,2}^0(f)[f'\beta_1+3\beta_2]}{4\mu}.
\end{aligned}
\end{equation}
Since $B_{n,m}^0(f)\in\kL(H^2(\R))$, see Lemma~\ref{L:MP0}~(iv) below, we immediately deduce from \eqref{vgamma} that indeed $v|_\G\circ\Xi\in H^2(\R)^2$.
\end{proof}

 \section{On the invertibility  of $\pm1/2+\bD(f)$ and $\pm1/2+\bD(f)^*$}\label{Sec:3}
In this section we establish the invertibility of the operators $\pm1/2-\bD(f)$ and~${\pm1/2-\bD(f)^*}$ in   $\kL(H^k(\R)^2)$, $k=0,\, 1,\, 2$,
 and  $\kL(H^{s-1}(\R)^2)$,~${s\in(3/2,2)},$  under suitable regularity assumptions on $f$.
These properties are needed on the one hand in the proof of Theorem~\ref{T:1}, see~\eqref{invertcom}, and on the other hand when formulating the Stokes flow as an evolution problem for~$f$,
 see~Section~\ref{Sec:4}.
The main step is provided by Theorem~\ref{T:L2spec} below.

\begin{thm}\label{T:L2spec}
Given  $\delta\in(0,1)$, there exists a  constant~$C_0=C_0(\delta)\geq1$ such that for all~${f\in {\rm BUC}^1(\R)}$ with~${\|f'\|_\infty\leq 1/\delta}$ and all $\beta\in L_2(\R)^2$ we have 
\begin{align}\label{DEest}
C_0\min\Big\{\Big\|\Big(\pm\frac{1}{2}-\bD(f)\Big)[\beta]\Big\|_2,\Big\|\Big(\pm\frac{1}{2}-\bD(f)^\ast\Big)[\beta]\Big\|_2\Big\}\geq  \|\beta\|_2.
\end{align}
Moreover, $\pm1/2-\bD(f)^\ast$ and $\pm1/2-\bD(f)$ are invertible in $\kL(L_2(\R))^2$.
\end{thm}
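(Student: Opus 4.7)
My plan is to reduce the invertibility claim to the a priori estimate \eqref{DEest} and then establish that estimate via Rellich identities. Because $\bD(f)$ and $\bD(f)^*$ are $L_2$-adjoints of each other, a uniform lower bound $\|T\beta\|_2 \geq c\|\beta\|_2$ applied simultaneously to $T = \pm 1/2 - \bD(f)^*$ and to $T^* = \pm 1/2 - \bD(f)$ already forces both operators to be bijective in $\kL(L_2(\R)^2)$: each is injective with closed range, and that range is then the orthogonal complement of the kernel of its adjoint, which is trivial. Hence the whole theorem reduces to the estimate \eqref{DEest}.

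To prove \eqref{DEest} I would adapt the Rellich-identity approach of \cite{FKV88} to the present unbounded 2D geometry. Fix $\beta \in L_2(\R)^2$ and consider the hydrodynamic single-layer potentials $(v^\pm, p^\pm)$ with density $\beta$ in $\Omega^\pm$, where $\Omega^- := \Omega$ and $\Omega^+ := \R^2 \setminus \ov\Omega$. These pairs solve the Stokes system in $\Omega^\pm$ with appropriate decay at infinity; the velocity trace on $\Gamma$ is continuous across the interface, while the two conormal stress traces satisfy, after composition with $\Xi$, the identity $(\mp 1/2 - \bD(f)^*)[\beta] = T_\mu(v^\pm, p^\pm)\tilde\nu|_\Gamma \circ \Xi$, and hence jump by $-\beta$ across $\Gamma$. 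Applying the Rellich identities \eqref{RELLICH1}--\eqref{RELLICH5} to $(v^\pm, p^\pm)$ on a truncation $\Omega^\pm \cap B_R$ with the constant test vector $e_2$, and supplementing them with the auxiliary identity of \cite{MBV18} to handle a term that the standard identities cannot control in an unbounded 2D exterior, should yield the two-sided equivalence
\[
\|T_\mu(v^\pm, p^\pm)\tilde\nu|_\Gamma\|_{L_2(\Gamma)} \approx \|\p_T v^\pm|_\Gamma\|_{L_2(\Gamma)},
\]
with constants depending only on $\delta$, where $\p_T$ denotes tangential differentiation along $\Gamma$. Since $v^+ = v^-$ on $\Gamma$, the right-hand side is the same from either side; subtracting the two stress traces and invoking the jump relation then produces $\|\beta\|_2 \leq C_0 \|(\pm 1/2 - \bD(f)^*)[\beta]\|_2$. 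The analogous bound for $\pm 1/2 - \bD(f)$ would then follow either by duality or by a parallel argument based on a double-layer representation.

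The step I expect to be most delicate is the passage $R \to \infty$ in the Rellich identity. Because the 2D Stokeslet has only logarithmic decay, $v^\pm$ itself does not vanish at infinity in general, and only suitable derivatives decay sufficiently fast. One must therefore verify carefully that the boundary contributions on the arcs $\p B_R \cap \Omega^\pm$ disappear in the limit, which requires sharp decay estimates for $\nabla v^\pm$ and $p^\pm$ and possibly a correction to remove the nondecaying leading-order part of $v^\pm$. The Rellich identity from \cite{MBV18}, originally designed for the 2D Muskat problem, is imported precisely to control the borderline contributions that the higher-dimensional argument of \cite{FKV88} does not encounter, and integrating it with the standard Stokes identities in a manner that uses only the bound $\|f'\|_\infty \leq 1/\delta$ — and not any higher regularity of $f$ — is the crux of the proof.
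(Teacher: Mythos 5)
Your functional-analytic reduction of the theorem to the estimate \eqref{DEest} is sound: a uniform lower bound on a bounded operator \emph{and} on its $L_2$-adjoint yields bijectivity of both directly. This is in fact a shade more elementary than the paper's route, which establishes the lower bound only for $\mp1/2-\bD(f)^*$, combines it with the estimate \cite[(3.15)]{MP2022} for $\lambda$ away from $\pm1/2$, and then invokes the method of continuity (passing to $\bD(f)$ only afterwards, as the adjoint of an isomorphism). Your worry about the passage $R\to\infty$ is legitimate but is sidestepped in the paper: the identities \eqref{RELLICH1}--\eqref{RELLICH5} are first derived for $f\in{\rm BUC}^\infty(\R)$ and $\beta\in{\rm C}_c^\infty(\R)^2$, for which $\nabla u,\ \Pi = O(|x|^{-1})$ at infinity so that Gauss' theorem applies directly on all of $\Omega^\pm$, and general $f\in{\rm BUC}^1(\R)$, $\beta\in L_2(\R)^2$ are then reached by density via \eqref{CDD*} and \eqref{regpullback}. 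No truncation or correction for the logarithmic growth of the Stokeslet is needed.

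The genuine gap lies in your decisive step. The Rellich identities, once transferred to $\R$ as \eqref{RELLICH1'}--\eqref{RELLICH5'}, furnish only \emph{one-directional} bounds: they control the pull-backs of $\Pi$ and $\nabla u^\pm$ on $\Gamma$ in terms of $\|(\mp1/2-\bD(f)^*)[\beta]\|_2$ and $\|\beta\|_2$. The two-sided equivalence you posit between the stress trace and the tangential velocity trace would require the converse direction as well; you neither prove it nor reduce it to anything the stated identities deliver, and in a Lipschitz setting with only $\|f'\|_\infty$ bounded this converse is essentially the content of the theorem being proved. The paper supplies the missing control through two operator-specific mechanisms your outline omits. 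First, pairing the trace formula \eqref{identitiesR}$_1$ against $\beta$ and using the \emph{antisymmetry} $\bT_2(f)^*=-\bT_2(f)$ kills the principal-value contribution and isolates the one-sided jump, giving $\|\beta\cdot\tau\|_2^2\leq C(\delta)\|\wtdu(f)[\beta]\|_2\|\beta\|_2$ (this is \eqref{estbtan}). Second, rewriting \eqref{identitiesR}$_2$ and inverting $\pm1+\bB_1(f)$ in $\kL(L_2(\R))$ with uniform bound $C(\delta)$ yields $\|\beta\cdot\nu\|_2\leq C(\delta)(\|\beta\cdot\tau\|_2+\|\wtp(f)[\beta]\|_2)$; and \emph{this} is where the Muskat Rellich identity of \cite{MBV18} enters --- to produce the invertibility of $\pm1+\bB_1(f)$, not, as you suggest, to control borderline boundary contributions at infinity. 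The loop is then closed by Young's inequality. Without analogues of these two steps your argument does not close.
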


The present section is devoted mainly to the proof of this theorem, which is split up in a number of steps.

\subsection{Preliminaries}

To start, we   reexpress the operators $\bD(f)$ and $\bD(f)^*$ by using the family of singular integral operators defined in \eqref{defB0}  as follows
\begin{equation}\label{DFB}
\begin{aligned}
\bD(f)[\beta] 
&=\begin{pmatrix}
B_{0,2}^0(f)&B_{1,2}^0(f)\\[1ex]
B_{1,2}^0(f)&B_{2,2}^0(f)
\end{pmatrix}
\begin{pmatrix}
f'\beta_1\\[1ex]
f'\beta_2
\end{pmatrix}
-\begin{pmatrix}
B_{1,2}^0(f)&B_{2,2}^0(f)\\[1ex]
B_{2,2}^0(f)&B_{3,2}^0(f)
\end{pmatrix}
\begin{pmatrix}
\beta_1\\[1ex]
\beta_2
\end{pmatrix},\\[1ex]
\bD(f)^*[\beta] 
&=-f'\begin{pmatrix}
B_{0,2}^0(f)&B_{1,2}^0(f)\\[1ex]
B_{1,2}^0(f)&B_{2,2}^0(f)
\end{pmatrix}
\begin{pmatrix}
\beta_1\\[1ex]
\beta_2
\end{pmatrix}
+\begin{pmatrix}
B_{1,2}^0(f)&B_{2,2}^0(f)\\[1ex]
B_{2,2}^0(f)&B_{3,2}^0(f)
\end{pmatrix}
\begin{pmatrix}
\beta_1\\[1ex]
\beta_2
\end{pmatrix}
\end{aligned} 
\end{equation}
for $\beta=(\beta_1,\beta_2)^\top\in L_2(\R)^2$.

Since the operators $B_{n,m}$ are well-studied by now,  mapping properties for the operators~$\bD(f)$ and $\bD(f)^*$  can be obtained by using the representation~\eqref{DFB} and Lemma~\ref{L:MP0} below
 (which collects some important properties of the operators $B_{n,m}$).
  In the following, for $n\in\N$ and Banach spaces $E$ and $F$,   we define $\kL^n_{\rm sym}(E,F)$ as the Banach space of $n$-linear, bounded, and symmetric maps $A: E^n\to F$.
 Moreover, ${\rm C}^{1-}(E,F)$ (resp.~${{\rm C}^{\infty}(E,F)}$) is the space of locally Lipschitz continuous (resp. smooth) mappings from~$E$ to~$F$.
 
\begin{lemma}\label{L:MP0}\,

\begin{itemize}
\item[(i)] Given  Lipschitz continuous  functions $a_1,\ldots, a_{m},\, b_1, \ldots, b_n:\mathbb{R}\longrightarrow\mathbb{R}$,  there exists a constant~$C$ depending only 
on $n,\, m$ and $\max_{i=1,\ldots, m}\|a_i'\|_{\infty}$, such that
 $$\|B_{n,m}(a_1,\ldots, a_m)[b_1,\ldots,b_n,\,\cdot\,]\|_{\kL(L_2(\mathbb{R}))}\leq C\prod_{i=1}^{n} \|b_i'\|_{\infty}.$$ 
 Moreover,   $B_{n,m}\in {\rm C}^{1-}((W^1_\infty(\mathbb{R}))^{m},\kL^n_{\rm sym}(W^1_\infty(\mathbb{R}), \kL(L_2(\mathbb{R})))).$
  \item[(ii)] Let   $n\geq1,$  $s\in(3/2,2),$  and  $a_1,\ldots, a_m\in H^s(\R)$ be given.  
  Then, there exists a constant~$C$, depending only on $n,\, m$, $s$,  and $\max_{1\leq i\leq m}\|a_i\|_{H^s}$, such that
\begin{align} 
&\| B_{n,m}(a_1,\ldots, a_{m})[b_1,\ldots, b_n,h]\|_2\leq C\|b_1\|_{H^1}\|h\|_{H^{s-1}}\prod_{i=2}^{n}\|b_i\|_{H^s} \label{REF1}
\end{align}
for all $b_1,\ldots, b_n\in H^s(\R)$ and $h\in H^{s-1}(\R).$
 \item[(iii)] Given $s\in(3/2 ,2)$ and $a_1,\ldots, a_m, b_1,\ldots, b_n\in H^s(\mathbb{R})$, there exists a constant $C,$
  depending only on $n,\, m,\, s$,  and $\max_{1\leq i\leq m}\|a_i\|_{H^s},$ such that
\begin{align*} 
\| B_{n,m}(a_1,\ldots, a_{m})[b_1,\ldots, b_n,\cdot]\|_{\kL(H^{s-1}(\mathbb{R}))}\leq C \prod_{i=1}^{n}\|b_i\|_{H^{s}}.
\end{align*}
Moreover,   $  B_{n,m}\in {\rm C}^{1-}((H^s(\mathbb{R}))^m, \kL^n_{\rm sym}(H^s(\mathbb{R}), \kL(H^{s-1}(\mathbb{R})))).$ \\[-1ex]

 \item[(iv)] Let   $a_1,\ldots, a_m\in H^2(\R)$   be given. 
  Then, there exists a constant~$C$, depending  only on~$n,\, m$,  and $\max_{1\leq i\leq m}\|a_i\|_{H^2}$, such that
\begin{align} 
\| B_{n,m}(a_1,\ldots, a_{m})[b_1,\ldots, b_n,h]\|_{H^1}\leq C \|h\|_{H^1}\prod_{i=1}^{n}\|b_i\|_{H^2} \label{FER1}
\end{align}
for all $b_1,\ldots, b_n\in H^2(\R)$ and $h\in H^1(\R) $, with
\begin{equation}\label{FDER}
\begin{aligned}
&\hspace{-1cm}(B_{n,m}(a_1,\ldots, a_{m})[b_1,\ldots, b_n,h])'\\[1ex]
&=B_{n,m}(  a_1,\ldots,  a_m) [b_1,\ldots , b_n, h' ]\\[1ex]
&\hspace{0,45cm}+\sum_{i=1}^nB_{n,m}(a_1,\ldots,a_m)[b_1,\ldots,b_{i-1}, b_i',b_{i+1},\ldots  b_n, h]\\[1ex]
&\hspace{0,45cm}-2\sum_{i=1}^mB_{n+2,m+1}( a_1,\ldots,  a_i, a_i,\ldots,a_m) [b_1,\ldots,b_n, a_i',a_i, h ].
\end{aligned}
\end{equation}
Moreover, $B_{n,m}\in {\rm C}^{1-}(H^2(\R)^m,\kL^{n}_{\rm sym}(H^2(\R),\kL(H^1(\R)))).$ 
\end{itemize}
\end{lemma}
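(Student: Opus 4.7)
The plan is to view $B_{n,m}(a_1,\ldots,a_m)[b_1,\ldots,b_n,h]$ as an iterated Calder\'on commutator associated with the Lipschitz graphs parameterized by $a_1,\ldots,a_m$. For part~(i), expanding each denominator factor $[1+(\delta_{[\xi,\eta]}a_i/\eta)^2]^{-1}$ as a geometric series in $\delta_{[\xi,\eta]}a_i/\eta$ (pointwise bounded by $\|a_i'\|_\infty$) reduces the $L_2$-estimate to multilinear Coifman--McIntosh--Meyer type bounds for Calder\'on commutators, with a constant depending only on $n$, $m$, and $\max_i\|a_i'\|_\infty$. The local Lipschitz assertion then follows from multilinearity together with a coordinate-by-coordinate telescoping argument in the $a$-tuple: each individual difference is controlled by the same estimate. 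A substantial part of this should already be available from \cite{MBV18,MBV19}, where the family $B_{n,m}$ was introduced.

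For part~(iv), I would derive the formula \eqref{FDER} by differentiating the integrand in \eqref{BNM} under the principal value sign. Three groups of terms appear. Differentiating $h(\xi-\eta)$ in $\xi$ yields $h'(\xi-\eta)$, producing the first term. Differentiating a numerator factor $\delta_{[\xi,\eta]}b_i/\eta=(b_i(\xi)-b_i(\xi-\eta))/\eta$ yields $\delta_{[\xi,\eta]}b_i'/\eta$, giving the second sum. Finally, the quotient rule applied to the $i$-th denominator factor produces $-2(\delta_{[\xi,\eta]}a_i/\eta)(\delta_{[\xi,\eta]}a_i'/\eta)[1+(\delta_{[\xi,\eta]}a_i/\eta)^2]^{-2}$, which in the notation of \eqref{BNM}--\eqref{defB0} corresponds exactly to the $B_{n+2,m+1}$ terms with an extra copy of $a_i$ inserted into the $a$-tuple and $a_i', a_i$ appended to the $b$-tuple. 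With \eqref{FDER} at hand, the estimate \eqref{FER1} follows by applying (i) to each summand and invoking $H^2(\R)\hookrightarrow W^1_\infty(\R)$ to convert $L_\infty$-norms of derivatives to $H^2$-norms; the local Lipschitz property transfers term-wise from (i).

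Parts~(ii) and~(iii) should then be deduced by combining (i), (iv), and standard Sobolev multiplier estimates. Since $s\in(3/2,2)$ gives $H^s(\R)\hookrightarrow W^1_\infty(\R)$, the prefactor in (i) is controlled by $\max_i\|a_i\|_{H^s}$, so (i) immediately yields an $L_2$-estimate in terms of $H^s$-norms for the $b_i$'s and an $L_\infty$-norm for $h'$. The sharper $L_2$-bound \eqref{REF1}, in which only $b_1$ is required in $H^1$ and $h$ in $H^{s-1}$, is obtained via a paraproduct-type decomposition that trades fractional regularity between $h$ and the designated factor $b_1$, absorbing the deficit of derivatives of $h$ into $b_1'\in L_2$. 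The $H^{s-1}$-bound of (iii) is then obtained by interpolation between the $L_2$-bound from (i) and the $H^1$-bound furnished by (iv) via \eqref{FDER}, using that $H^{s-1}(\R)$ is a Banach algebra for $s>3/2$. The ${\rm C}^{1-}$-dependence on the $H^s$-arguments once again follows from multilinearity combined with the operator-norm Lipschitz continuity in (i).

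The principal technical obstacle I anticipate is justifying the interchange of $\partial_\xi$ with the principal value integral in the derivation of \eqref{FDER}. The cleanest path is a density argument: for $h\in {\rm C}^\infty_c(\R)$ the integrand is absolutely integrable and differentiation under the integral is elementary, giving \eqref{FDER} in this subclass. Both sides then extend continuously from ${\rm C}^\infty_c(\R)$ to $H^1(\R)$ by virtue of the $L_2$-mapping estimate from (i), applied to each of the operators appearing in the right-hand side of \eqref{FDER}, which allows passage to the limit and establishes \eqref{FDER} for arbitrary $h\in H^1(\R)$.
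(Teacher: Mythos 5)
The paper proves this lemma purely by citation: (i) and (ii) are taken from \cite[Lemmas~3.1 and~3.2]{MBV18}, (iii) from \cite[Lemma~5]{AM22}, and (iv) from \cite[Lemma~4.3]{MP2022}. Your proposal attempts to reconstruct the proofs, which is a more ambitious route. Your outline for (iv) is essentially sound: differentiating under the principal value sign for $h\in{\rm C}_c^\infty(\R)$, reading off the three groups of terms via product and quotient rule, and then extending to $h\in H^1(\R)$ by density and the $L_2$-mapping bound from (i) is exactly the right mechanism for \eqref{FDER}, after which \eqref{FER1} follows from (i) together with $H^2(\R)\hookrightarrow W^1_\infty(\R)$.

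There are, however, two genuine gaps. For (i), expanding $[1+(\delta_{[\xi,\eta]}a_i/\eta)^2]^{-1}$ as a geometric series in $\delta_{[\xi,\eta]}a_i/\eta$ converges only if $\|a_i'\|_\infty<1$, whereas the hypothesis allows arbitrary Lipschitz constants. The standard way to reduce to Coifman--McIntosh--Meyer bounds without a smallness restriction is a partial-fraction decomposition such as $\eta/(\eta^2+(\delta a)^2)=\tfrac12[(\eta-i\delta a)^{-1}+(\eta+i\delta a)^{-1}]$, which rewrites the kernel in terms of Cauchy integrals along Lipschitz graphs; the geometric series as written does not get off the ground. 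For (iii), interpolating between the $\kL(L_2)$-bound of (i) and the $\kL(H^1)$-bound of (iv) cannot yield the stated $\kL(H^{s-1})$-estimate: the $H^1$-bound of (iv) needs $a_i,b_i\in H^2(\R)$, strictly more regularity than the $H^s(\R)$, $s<2$, assumed in (iii). Interpolation in the $h$-variable does not weaken the regularity requirements on the coefficient functions, so your argument proves (iii) only under stronger hypotheses than claimed; one needs an argument working directly at the $H^s$-level (of the paraproduct/commutator type you gesture at for (ii)) rather than interpolation between (i) and (iv).
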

\begin{proof}
The claims (i) and (ii) are established in \cite[Lemmas~3.1 and~3.2]{MBV18},   the property~(iii) is proven in \cite[Lemmas~5]{AM22}, 
and~(iv) is established in \cite[Lemma 4.3]{MP2022}.
\end{proof}

As a direct consequence of \eqref{DFB} and Lemma~\ref{L:MP0}~(i) we obtain that 
\begin{equation}\label{CDD*}
[f\longmapsto \bD(f)],\, [f\longmapsto \bD(f)^*]\in {\rm C}^{1-}(W^1_\infty(\R),\kL(L_2(\R)^2)).
\end{equation}

Moreover,  by Lemma~\ref{L:MP0}~(iv), we have
\begin{equation}\label{Bnmk}
 [f\longmapsto B_{n,m}^0(f)]\in {\rm C}^{1-}(H^{k+1}(\R),\kL(H^k(\R)))\qquad\text{for $k=1,\, 2$.} 
\end{equation}

\subsection{Rellich identities on $\Gamma$}

The proof of Theorem~\ref{T:L2spec} relies on several Rellich identities for the Stokes problem, \eqref{RELLICH1}-\eqref{RELLICH5} below, 
which hold also in a bounded geometry~in $\R^n$,~$n\geq3$,
see~\cite{FKV88}. 

 Let $f\in{\rm BUC}^\infty(\RRM)$ and $\beta=(\beta_1,\beta_2)^\top\in {\rm C}_c^\infty(\RRM)^2$.
Using the notation from Section~\ref{Sec:2}, we set~${\0^-:=\0}$, $\0^+:=\R^2\setminus\ov\0$, and we define the hydrodynamic single-layer potential $(u, \Pi)$ by  the formula
\begin{equation}\label{iupi}
\left.\begin{aligned}
 u(x)&:=u(f)[\beta](x):=-\int_\R\cU^k( x-(s,f(s)))\beta_k(s)\, ds\\[1ex]
 \Pi(x)&:= \Pi(f)[\beta](x):=-\int_\R\cP^k( x-(s,f(s)))\beta_k(s)\, ds
\end{aligned} 
\right\} \qquad\text{for $x\in\R^2\setminus\Gamma$,}
 \end{equation}
 where  $\cU^k$ and $\cP^k$ are defined in \eqref{fundup} (with $\mu=1$). 
Since  $\beta$ is  compactly supported, it is not difficult to see that the functions $(u,\Pi)$ are well-defined, smooth in~$\Omega^\pm$, and satisfy
\begin{equation}\label{Sto34}
 \left.
 \begin{array}{rllll}
\Delta u-\nabla \Pi&=&0,\\[1ex]
\vdiv u&=&0
\end{array}\right\}\qquad\text{in  $\0^\pm$},
\end{equation}
as well as
\begin{equation}\label{decay}
\Pi,\,\nabla u=O(|x|^{-1})\qquad \mbox{for $|x|\to\infty$}.
\end{equation}
 Moreover,  \cite[Lemma~A.1]{BM22} and  the arguments in the proof of \cite[Lemma~A.1]{MP2021} enable us to conclude that $ \Pi^\pm:=\Pi|_{\Omega^\pm}$ and $u^\pm:=u|_{\Omega^\pm}$ satisfy
  $\Pi^\pm\in {\rm C}(\overline{\Omega^\pm})$ and 
$u^\pm\in {\rm C}^1(\overline{\Omega^\pm})$, with
\begin{equation}\label{repp}
\begin{aligned}
     \partial_iu_j^\pm|_\G \circ\Xi(\xi) &=-\PV\int_\R\partial_i\cU_j^k(r)\beta_k\,ds\pm\frac{-\beta_j\nu^i+\nu^i\nu^j\beta\cdot\nu}{2\omega}(\xi),\quad i,\, j=1,\, 2,\\
     \Pi^\pm|_\G\circ\Xi(\xi) &= -\PV\int_\R\cP^k(r)\beta_k\,ds\pm\frac{\beta\cdot\nu}{2\omega}(\xi)
\end{aligned}
\end{equation}
for $\xi\in\RRM$, with $r=r(\xi,s)$  defined in  \eqref{rxs}.

Recalling the definition \eqref{defT} of the stress tensor, we then compute in view of \eqref{Sto34}
\begin{equation*}
\begin{aligned}
{\rm div\, }\begin{pmatrix}
0\\[1ex]
\|\nabla u+(\nabla u)^\top\|_F^2
\end{pmatrix}&=4 \,{\rm div\, } \big(T_1(u,\Pi)\p_2 u\big)\qquad\text{in $\R^2\setminus\Gamma$},\\[1ex]
{\rm div\,}\begin{pmatrix}
0\\[1ex]
\|\nabla u\|_F^2
\end{pmatrix}&
=2\, {\rm div\, } \big(((\nabla u)^\top -\Pi E_2)\p_2 u\big)\qquad\text{in $\R^2\setminus\Gamma$},\\[1ex]
{\rm div\,}\begin{pmatrix}
0\\[1ex]
\Pi^2
\end{pmatrix}&=2\,{\rm div\,}\big( (\p_1u_2-\p_2u_1)\p_1u+ \Pi\nabla u_2\big)\qquad\text{in $\R^2\setminus\Gamma$,}
\end{aligned}
\end{equation*}
where $\|\cdot\|_F$ denotes as usual the Frobenius norm of matrices.
 Using \eqref{decay}, we may integrate the latter identities over $\0^\pm$ to obtain, in view of Gauss' theorem, the Rellich identities
\begin{align} 
\int_\Gamma \|\nabla u^\pm+(\nabla u^\pm)^\top\|_F^2\tilde\nu^2\, d\Gamma&=4\int_\Gamma\p_2 u^\pm  \cdot  T_1(u,\Pi)^\pm  \tilde \nu\, d\Gamma,\label{RELLICH1}\\[1ex]
\int_\Gamma \|\nabla u^\pm\|_F^2\tilde\nu^2\, d\Gamma&=2\int_\Gamma \p_2 u^\pm \cdot ( \nabla u^\pm -\Pi^\pm E_2)\tilde\nu\, d\Gamma,\label{RELLICH2}\\[1ex]
\int_\Gamma |\Pi^\pm|^2\tilde\nu^2\, d\Gamma&=2\int_\Gamma (\p_1u_2^\pm-\p_2u_1^\pm)\p_{\tilde\tau}u_2^\pm+\Pi^\pm \p_{\tilde\nu}u_2^\pm\, d\Gamma.\label{RELLICH3}
\end{align}
We now subtract \eqref{RELLICH1} from  \eqref{RELLICH2} multiplied by $4$ to get
  \begin{align} 
 \int_\Gamma |\p_1u_2^\pm-\p_2u_1^\pm|^2\tilde\nu^2\, d\Gamma=2\int_\Gamma \Pi^\pm\p_{\tilde\tau} u_1^\pm-(\p_1u_2^\pm-\p_2u_1^\pm) \p_{\tilde\nu} u_1^\pm\, d\Gamma.\label{RELLICH4}
\end{align}
Furthermore, from
\[
(T_1(u^\pm,\Pi^\pm)+\Pi^\pm E_2)\tilde\nu=\big(\nabla u^\pm+(\nabla u^\pm)^\top\big)\tilde\nu=\begin{pmatrix}
\p_{\tilde \nu}u_1^\pm+\p_{\tilde\tau} u_2^\pm\\[1ex]
\p_{\tilde \nu}u_2^\pm-\p_{\tilde\tau} u_1^\pm
\end{pmatrix},
\]
we obtain, after taking the difference of \eqref{RELLICH3} and \eqref{RELLICH4}, 
\begin{equation}\label{RELLICH5}
\begin{aligned} 
&{\hspace{-1cm}} \int_\Gamma |\p_1u_2^\pm-\p_2u_1^\pm|^2\tilde\nu^2\, d\Gamma\\[1ex]
&=\int_\Gamma |\Pi^\pm|^2\tilde\nu^2\, d\Gamma
 -2\int_\Gamma 
 \begin{pmatrix}
 \p_1u_2^\pm-\p_2u_1^\pm\\[1ex]
 \Pi^\pm
 \end{pmatrix}\cdot \big(T_1(u^\pm,\Pi^\pm)+\Pi^\pm E_2\big)\tilde\nu\, d\Gamma.
\end{aligned}
\end{equation}

\subsection{Transformation to the real axis}\ 
To represent the pull-backs of the one-sided traces of $(\nabla u)_{ij}$  and $\Pi$  on $\G$ as singular integral operators, 
we define, for~$f\in W^1_\infty(\RRM)$, ${\beta\in   L_2}(\RRM)^2$, 
 and $\theta\in L_2(\R)$, the singular integral operators $\bT_i$, $\bB_i$, $i=1,\,2$, as follows:
\begin{align*}
 \bT_1(f)[\beta](\xi)&:=\frac{1}{4\pi}\PV\int_\R\frac{1}{|r|^4}
\begin{pmatrix}
 r_1r_2^2-r_1^3& r_2^3- r_1^2 r_2\\
r_2^3-r_1^2 r_2&  -r_1^3 -3r_1r_2^2
 \end{pmatrix}\beta\,ds,\\[1ex]
 \bT_2(f)[\beta](\xi)&:=\frac{1}{4\pi}\PV\int_\R\frac{1}{|r|^4}
\begin{pmatrix}
 - r_2^3-3r_1^2r_2& r_1^3- r_1 r_2^2\\
r_1^3-r_1 r_2^2&  r_1^2 r_2- r_2^3
 \end{pmatrix}\beta\,ds,\\[1ex]
 \bB_1(f)[\theta](\xi)&:=\frac{1}{\pi}\PV\int_\R\frac{- r_1 f'+ r_2}{ |r|^2}\,\theta\,ds,\\[1ex]
    \bB_2(f)[\theta](\xi)&:=\frac{1}{\pi}\PV\int_\R\frac{r_1 + r_2 f'}{|r|^2}\,\theta\,ds,
 \end{align*}
in the notation introduced in~\eqref{rxs}. Since the components of these operators may be expressed by using only the singular operators~${B^{0}_{n,m}(f)}$, we infer from Lemma~\ref{L:MP0}~(i) that 
 \begin{equation}\label{regTB}
 \bT_i\in {\rm C}^{1-}(W^1_\infty(\R),\kL(L_2(\R)^2)),\quad \bB_i\in {\rm C}^{1-}(W^1_\infty(\R),\kL(L_2(\R))).
 \end{equation}

It follows from \eqref{fundup}, \eqref{stokesdiff} (with~${\mu=1}$) and~\eqref{repp} that for~${f\in{\rm BUC}^\infty(\RRM)}$ and $\beta\in {\rm C}_c^\infty(\RRM)^2$
we have (in matrix notation)
\be\label{identitiesR}
\begin{array}{rl}
 \nabla u^\pm(f)[\beta]\big|_\G\circ\Xi&=\Big(\bT_1(f)[\beta]\;\bT_2(f)[\beta]\Big)\mp\cfrac{(\beta\cdot\tau)}{2\omega}\tau\,\nu^\top
=:\wtdu(f)[\beta],\\[2ex]
 \Pi^\pm(f)[\beta]\big|_\G\circ\Xi&=\displaystyle\frac{(\pm1+\bB_1(f))[\omega^{-1}\beta\cdot\nu]+\bB_2(f)[\omega^{-1}\beta\cdot\tau]}{2}
=:\wtp(f)[\beta],
\end{array}
\ee
the right sides of \eqref{identitiesR} being meaningful whenever $f\in W^1_\infty(\RRM)$ and $\beta\in   L_2(\RRM)^2$.
To translate the Rellich identities of the previous subsection to identities for integral operators on $\R$ it is convenient to additionally introduce the operators $\wtt$ and $\wtdtwou$ by
\begin{align*}
  \wtt(f)[\beta]&:=\wtdu(f)[\beta]+\wtdu(f)[\beta]^\top-\wtp(f)[\beta] E_2,\\
  \wtdtwou(f)[\beta]&:=\wtdu(f)[\beta]e_2, 
  \end{align*}
where $e_2:=(0,1)^\top$.

From \eqref{regTB} we immediately get
\be\label{regpullback}
\left.\begin{array}{rl}
\wtdu,\wtt&\in {\rm C}^{1-}(W^1_\infty(\R),\kL(L_2(\R)^2,L_2(\R)^{2\times 2})),\\
\wtdtwou&\in {\rm C}^{1-}(W^1_\infty(\R),\kL(L_2(\R)^2,L_2(\R)^2)),\\
\wtp&\in {\rm C}^{1-}(W^1_\infty(\R),\kL(L_2(\R)^2,L_2(\R))).
\end{array}\right\}
\ee

  It is not difficult to check that
 \begin{equation}\label{ffff}
 \omega \wtt(f)[\beta]\nu=\Big(\mp\frac{1}{2}-\bD(f)^\ast\Big)[\beta].
 \end{equation}
 
 Parameterizing $\Gamma$  over $\R$ via $[s\longmapsto(s,f(s))]$
   and using \eqref{identitiesR} and \eqref{ffff}, we find from~\eqref{RELLICH1} that
 \begin{equation}\label{RELLICH1'}
\Big\|\wtt(f)[\beta]+\wtp(f)[\beta]E_2\Big\|_2^2=4\Big\langle\wtdtwou(f)[\beta]\,\Big|\,\Big(\mp\frac{1}{2}-\bD(f)^\ast\Big)[\beta]\Big\rangle_2,
\end{equation}
where $\langle\cdot|\cdot\rangle$ denotes the $L_2(\R)^2$ scalar product.
Similarly, from \eqref{RELLICH2} and \eqref{RELLICH5} we get 
 \begin{equation} \label{RELLICH2'}
\Big\|\wtdu(f)[\beta]\Big\|_2^2=2 \Big\langle\wtdtwou(f)[\beta] \Big| \omega^{-1}\Big( \wtdu(f)[\beta] -\wtp(f)[\beta] E_2\Big)\nu  \Big\rangle_2
\end{equation}
and
\begin{equation}\label{RELLICH5'}
\begin{aligned} 
&{\hspace{-1cm}}2\left\langle
 \begin{pmatrix}
 \Big(\widetilde{(\nabla u)^\pm_{21}}(f)-\widetilde{(\nabla u)^\pm_{12}}(f)\Big)[\beta]\\[1ex]
 \wtp(f)[\beta]
 \end{pmatrix} \right| \left.\Big(\mp\frac{1}{2}-\bD(f)^\ast\Big)[\beta]+\wtp(f)[\beta] \begin{pmatrix}
 -f'\\[1ex]
 1
 \end{pmatrix} \right\rangle_2\\[1ex]
 &=\Big\|\wtp(f)[\beta]\Big\|_2^2 -\Big\|\Big(\widetilde{(\nabla u)^\pm_{21}}(f)-\widetilde{(\nabla u)^\pm_{12}}(f)\Big)[\beta]\Big\|_2^2,
\end{aligned}
\end{equation}
 respectively. 
 
 By a standard density argument, it follows from \eqref{CDD*} and \eqref{regpullback} that \eqref{RELLICH1'}--\eqref{RELLICH5'} 
 hold for any~${f\in{\rm  BUC} ^1(\RRM)}$ and~$\beta\in  L_2(\R)^2$.
 
 \subsection{Completion of the proof of Theorem \ref{T:L2spec}} We divide the remaining arguments in the proof of Theorem \ref{T:L2spec} in three steps.\medskip

\noindent{\em Step 1.} Fix $\delta\in (0,1)$ and ${f\in{\rm BUC}^1(\RRM)}$  such that $\|f'\|_\infty\leq 1/\delta$. 
  In the sequel, we are going to write $C(\delta)$ for different positive constants that depend on $\delta$ only.
 Let $\beta\in L_2(\R)^2$.
 Using Lemma~\ref{L:MP0}~(i), we find  a constant  $C(\delta)$ such that  the right side of \eqref{RELLICH1'} satisfies
 \begin{equation*}
4\Big\langle\wtdtwou(f)[\beta]\,\Big|\,\Big(\mp\frac{1}{2}-\bD(f)^\ast\Big)[\beta]\Big\rangle_2\leq C(\delta)\Big\|\Big(\mp\frac{1}{2}-\bD(f)^\ast\Big)[\beta]\Big\|_2\|\beta\|_2.
\end{equation*}
For the left side of \eqref{RELLICH1'} we have, in view of  \eqref{ffff} and Lemma~\ref{L:MP0}~(i),
\begin{align*}
\Big\|\wtt(f)[\beta]+\wtp(f)[\beta]E_2\Big\|_2^2\geq& \frac{\delta^2}{2}\int_\R \Big|\omega \wtt(f)[\beta]\nu+\omega\wtp(f)[\beta]\nu\Big|^2 \, dx\\[1ex]
\geq&\frac{\delta^2}{4}\Big\|\wtp(f)[\beta]\Big\|_2^2-\frac{\delta^2}{2}\Big\|\Big(\mp\frac{1}{2}-\bD(f)^\ast\Big)[\beta]\Big\|_2^2\\[1ex]
\geq&\frac{\delta^2}{4}\Big\|\wtp(f)[\beta]\Big\|_2^2-C(\delta)\Big\|\Big(\mp\frac{1}{2}-\bD(f)^\ast\Big)[\beta]\Big\|_2\|\beta\|_2.
\end{align*}
These estimates show that there exists a constant $ C(\delta)$ with the property that
\begin{equation}\label{L32}
C(\delta)\Big\|\Big(\mp\frac{1}{2}-\bD(f)^\ast\Big)[\beta]\Big\|_2\|\beta\|_2\geq\Big\|\wtp(f)[\beta]\Big\|_2^2
\end{equation}
 for all $\beta\in L_2(\R)^2$.\medskip

\noindent
{\em Step 2.} It follows from \eqref{RELLICH5'} that
\begin{equation*} 
 \begin{aligned}
 &\Big\|\Big(\widetilde{(\nabla u)^\pm_{21}}(f)-\widetilde{(\nabla u)^\pm_{12}}(f)\Big)[\beta]\Big\|_2^2\\[1ex]
 &\leq C(\delta)\bigg[\Big\|\Big(\mp\frac{1}{2}-\bD(f)^\ast\Big)[\beta]\Big\|_2^2+\Big\|\wtp(f)[\beta]\Big\|_2^2\\[1ex]
 &\hspace{1.5cm}+\Big\|\Big(\widetilde{(\nabla u)^\pm_{21}}(f)-\widetilde{(\nabla u)^\pm_{12}}(f)\Big)[\beta]\Big\|_2
 \Big(\Big\|\Big(\mp\frac{1}{2}-\bD(f)^\ast\Big)[\beta]\Big\|_2+\Big\|\wtp(f)[\beta]\Big\|_2\Big)\bigg],
 \end{aligned}
 \end{equation*}
 hence 
 \begin{equation}\label{estrot}
\Big\|\Big(\widetilde{(\nabla u)^\pm_{21}}(f)-\widetilde{(\nabla u)^\pm_{12}}(f)\Big)[\beta]\Big\|_2^2\leq C(\delta)
\Big(\Big\|\Big(\mp\frac{1}{2}-\bD(f)^\ast\Big)[\beta]\Big\|_2^2+\Big\|\wtp(f)[\beta]\Big\|_2^2\Big).
 \end{equation}
 Furthermore, as
 \[
 2\wtdu(f)[\beta]\nu=\frac{1}{\omega}\Big(\mp\frac{1}{2}-\bD(f)^\ast\Big)[\beta]
 +\wtp(f)[\beta]\nu-\Big(\widetilde{(\nabla u)^\pm_{21}}(f)-\widetilde{(\nabla u)^\pm_{12}}(f)\Big)[\beta]\tau,
 \]
 we infer from \eqref{estrot} that
 \begin{equation}\label{estjacnorm}
\Big\|\wtdu(f)[\beta]\nu\Big\|_2^2\leq C(\delta)
\Big(\Big\|\Big(\mp\frac{1}{2}-\bD(f)^\ast\Big)[\beta]\Big\|_2^2
 +\Big\|\wtp(f)[\beta]\Big\|_2^2\Big).
 \end{equation}
 The identity \eqref{RELLICH2'}  implies the estimate
 \[\Big\|\wtdu(f)[\beta]\Big\|_2^2\leq C(\delta)\Big\|\wtdu(f)[\beta]\Big\|_2
 \Big(\Big\|\wtdu(f)[\beta]\nu\Big\|_2+\Big\|\wtp(f)[\beta]\Big\|_2\Big),\]
 and together with \eqref{estjacnorm} this yields
 \begin{equation}\label{estjac}
 \Big\|\wtdu(f)[\beta]\Big\|_2^2\leq C(\delta)
\Big(\Big\|\Big(\mp\frac{1}{2}-\bD(f)^\ast\Big)[\beta]\Big\|_2^2
 +\Big\|\wtp(f)[\beta]\Big\|_2^2\Big).
 \end{equation}
 Multiplying the identity \eqref{identitiesR}$_1$ by $e_2$ and taking subsequently  the scalar product with $\beta$, we observe that
 \[\|\beta\cdot\tau\|_2^2=\mp2\omega^2\Big(\Big\langle\beta\,\Big|\,\wtdu(f)[\beta]e_2\Big\rangle
 -\langle\beta\,|\,\bT_2(f)[\beta]\rangle\Big).\]
 The second term on the right vanishes as $\bT_2(f)^\ast=-\bT_2(f)$, and thus
 \begin{equation}\label{estbtan}
 \|\beta\cdot\tau\|_2^2\leq C(\delta)\Big\|\wtdu(f)[\beta]\Big\|_2\|\beta\|_2
 \end{equation}
Next, we rewrite \eqref{identitiesR}$_2$ as
\[(\pm1+\bB_1(f)[\omega^{-1}\beta\cdot\nu]=2\wtp(f)[\beta]-\bB_2(f)[\omega^{-1}\beta\cdot\tau].\]
Letting $\bA(f):=\bB_1(f)^\ast$, it follows from  the Rellich identity for the Muskat problem established in the proof of~\cite[Theorem~3.5]{MBV18} that the operator~${(\pm 1-\bA(f))\in\kL(L_2(\R))}$  is an isomorphism with
\[\|\left(\pm 1+\bA(f)\right)^{-1}\|_{\kL(L_2(\R))}\leq C(\delta).\]
This implies that also its adjoint $(\pm 1- \bB_1(f))\in\kL(L_2(\R^2))$ is an isomorphism  and
\[ \|(\pm 1+\bB_1(f))^{-1}\|_{ \kL(L_2(\R))}\leq C(\delta).\]
Using this and Lemma~\ref{L:MP0}~(i) we get
\[\|\beta\cdot\nu\|_2\leq C(\delta)\Big(\|\beta\cdot\tau\|_2+\Big\|\wtp(f)[\beta]\Big\|_2\Big),\]
and together with \eqref{estbtan} and Young's inequality we arrive at
\[\|\beta\|_2^2=\|\beta\cdot\nu\|_2^2+\|\beta\cdot\tau\|_2^2\leq C(\delta)\Big(\Big\|\wtdu(f)[\beta]\Big\|_2^2+\Big\|\wtp(f)[\beta]\Big\|_2^2\Big).\]
In view  of \eqref{estjac} we infer from the latter inequality that
\[\|\beta\|_2^2\leq C(\delta)\Big(\Big\|\Big(\mp\frac{1}{2}-\bD(f)^\ast\Big)[\beta]\Big\|_2^2+\Big\|\wtp(f)[\beta]\Big\|_2^2\Big),\]
and together with \eqref{L32} and Young's inequality we finally obtain
 \begin{equation}\label{DEest1}
 \|\beta\|_2\leq C(\delta)\Big\|\Big(\mp\frac{1}{2}-\bD(f)^\ast\Big)[\beta]\Big\|_2.
 \end{equation}
 
\noindent{\em Step 3.} In view of the identity
\[
(\lambda-\bD(f)^\ast)[\beta]=\Big(\mp\frac{1}{2}-\bD(f)^\ast\Big)[\beta]+\Big(\lambda\pm\frac{1}{2}\Big)\beta, \quad     \lambda\in\mathbb C,\, \beta\in L_2(\R)^2,
\]
we deduce from \eqref{DEest1} that 
 \[C(\delta)\|(\lambda-\bD(f)^\ast)[\beta]\|_2\geq (1-C(\delta)|\lambda\pm1/2|)\|\beta\|_2 , \quad     \lambda\in\mathbb C,\, \beta\in L_2(\R)^2,\]
and therefore
 \begin{equation}\label{DEest2}
 \|\beta\|_2\leq  C(\delta)\|(\lambda-\bD(f)^\ast)[\beta]\|_2\quad
 \text{for $\lambda$ sufficiently close to $\pm1/2$ and    $ \beta\in L_2(\R)^2$.}
 \end{equation}
  Now \eqref{DEest2} together with the estimate \cite[(3.15)]{MP2022} shows there exists a constant~${C_0=C_0(\delta)\geq 1}$ such that 
\[
C_0\|(\lambda-\bD(f)^*)[\beta]\|_2\geq \|\beta\|_2\qquad\text{for all $\beta\in L_2(\R)^2$ and all $\lambda\in\R\setminus(-1/2,1/2)$.}
\]
 As $\bD(f)^\ast$ is in $\kL(L_2(\R)^2),$ the shift $\lambda-\bD(f)^\ast\in\kL(L_2(\R)^2)$ is an isomorphism if~$|\lambda|$ is sufficiently large.
The method of continuity, cf. e.g. \cite[Proposition~I.1.1.1]{Am95}, implies now that~${\pm1/2-\bD(f)^*}$, and hence also $\pm1/2-\bD(f)$, are  isomorphisms as well.
 This completes the proof of Theorem \ref{T:L2spec}.
 
 \subsection{Spectral properties in Sobolev spaces}
In Lemma~\ref{L:3} we establish the invertibility of the operators considered in Theorem~\ref{T:L2spec} in the Banach algebras $\kL(H^k(\R)^2)$, $k=1,\, 2$.
\begin{lemma}\label{L:3} \ 
 For $f\in H^{k+1}(\R)$, $k=1,\, 2$, the operators~${\pm1/2-\bD(f)}$ and~${\pm1/2-\bD(f)^*}$ are invertible in $\kL(H^k(\R)^2)$.
\end{lemma}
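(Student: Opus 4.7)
I would prove Lemma~\ref{L:3} by the method of continuity along the homotopy $L_\lambda^\pm:=\pm\tfrac{1}{2}-\bD(\lambda f)$, $\lambda\in[0,1]$, coupled with uniform $H^k$ a priori estimates derived from the commutation identity \eqref{icomder} and the $L_2$-invertibility guaranteed by Theorem~\ref{T:L2spec}; a parallel homotopy $\pm\tfrac{1}{2}-\bD(\lambda f)^*$ handles the adjoint.

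First, by \eqref{DFB}, Lemma~\ref{L:MP0}~(iv), and the fact that $\lambda f'\in H^k(\R)\cap L_\infty(\R)$ is a bounded pointwise multiplier on $H^k(\R)$ for $k\in\{1,2\}$, the operators $\bD(\lambda f)$ and $\bD(\lambda f)^*$ belong to $\kL(H^k(\R)^2)$ with norms uniformly bounded in $\lambda\in[0,1]$. Moreover, since $\|\lambda f'\|_\infty\leq\|f'\|_\infty$ and $H^{k+1}(\R)\hookrightarrow {\rm BUC}^1(\R)$, Theorem~\ref{T:L2spec} supplies a constant $C$ independent of $\lambda$ such that $\|\beta\|_2\leq C\|L_\lambda^\pm[\beta]\|_2$ and $\|\beta\|_2\leq C\|(\pm\tfrac{1}{2}+\bD(\lambda f)^*)[\beta]\|_2$ for all $\beta\in L_2(\R)^2$, where for the latter one uses $\pm\tfrac{1}{2}+\bD(\lambda f)^*=-[\mp\tfrac{1}{2}-\bD(\lambda f)^*]$.

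For $\beta\in H^1(\R)^2$ solving $L_\lambda^\pm[\beta]=\varphi\in H^1(\R)^2$, the $L_2$-bound yields $\|\beta\|_2\leq C\|\varphi\|_2$. Differentiating the equation and invoking \eqref{icomder} produces $(\pm\tfrac{1}{2}+\bD(\lambda f)^*)[\beta']=\varphi'$ in $L_2(\R)^2$, whence the second $L_2$-bound delivers $\|\beta'\|_2\leq C\|\varphi'\|_2$. Altogether $\|\beta\|_{H^1}\leq C\|\varphi\|_{H^1}$ uniformly in $\lambda\in[0,1]$; since $L_0^\pm=\pm\tfrac{1}{2}$ is trivially invertible in $\kL(H^1(\R)^2)$, the method of continuity \cite[Proposition~I.1.1.1]{Am95} extends invertibility to $\lambda=1$, so that $\pm\tfrac{1}{2}-\bD(f)$ is invertible in $\kL(H^1(\R)^2)$. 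Applying the same scheme to the homotopy $\pm\tfrac{1}{2}-\bD(\lambda f)^*$, after establishing an identity of the form $(\bD(f)^*[\beta])'=-\bD(f)[\beta']+R(f)[\beta]$ with a remainder $R(f)\in\kL(H^1(\R)^2,L_2(\R)^2)$ extracted from \eqref{DFB} and \eqref{FDER}, yields the invertibility of $\pm\tfrac{1}{2}-\bD(f)^*$ in $\kL(H^1(\R)^2)$.

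The case $k=2$ then follows by bootstrap: for $\varphi\in H^2(\R)^2$, the $H^1$-inverse yields $\beta\in H^1(\R)^2$, and the relation $(\pm\tfrac{1}{2}+\bD(f)^*)[\beta']=\varphi'\in H^1(\R)^2$ together with the $H^1$-invertibility of $\pm\tfrac{1}{2}-\bD(f)^*$ forces $\beta'\in H^1(\R)^2$, hence $\beta\in H^2(\R)^2$; a parallel $H^2$ a priori estimate plus the method of continuity then upgrade this to invertibility in $\kL(H^2(\R)^2)$. The principal technical difficulty is the derivation of the identity for $(\bD(f)^*[\beta])'$ in the adjoint case: because the kernel of $\bD(f)^*$ depends explicitly on $f'(\xi)$ rather than on the integration variable $f'(s)$ as for $\bD(f)$, differentiation produces an additional $f''$-term which, via \eqref{FDER}, the one-dimensional Sobolev embedding $H^1(\R)\hookrightarrow L_\infty(\R)$, and $f\in H^{k+1}(\R)$, must be shown to be bounded from $H^1(\R)^2$ into $L_2(\R)^2$ without spoiling the uniform bound required for the continuity argument.
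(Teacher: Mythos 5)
Your proposal takes a route that is close in spirit to the paper's (a priori $H^k$ estimate plus continuity) but with one genuine difference and one genuine gap.

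\textbf{The difference.} For $\pm\tfrac12-\bD(f)$ you use the exact commutation identity \eqref{icomder}, which turns the derivative of the equation into $(\pm\tfrac12+\bD(\lambda f)^*)[\beta']=\varphi'$ with \emph{no} remainder, and the $L_2$-estimate of Theorem~\ref{T:L2spec} then closes the $H^1$ bound immediately. The paper instead writes $T[\beta]:=(\bD(f)[\beta])'-\bD(f)[\beta']$, uses \eqref{FDER} to express $T$ in terms of $B_{n,m}$ operators, and proves the \emph{lower-order} bound $\|T[\beta]\|_2\leq C_1\|\beta\|_{H^{s-1}}$ for some $s\in(3/2,2)$, after which interpolation and Young absorb the remainder. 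Your version is cleaner for $\bD(f)$ itself; it does, however, invoke Lemma~\ref{L:A1} forward of where it appears in the paper (no circularity, since Lemma~\ref{L:A1} does not use Lemma~\ref{L:3}, but the order would need to be adjusted in a formal write-up). The paper's approach has the advantage of treating $\bD(f)$ and $\bD(f)^*$ entirely symmetrically via \eqref{DFB} and \eqref{FDER}.

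\textbf{The gap.} Your treatment of the adjoint is not sufficient as stated. You propose $(\bD(f)^*[\beta])'=-\bD(f)[\beta']+R(f)[\beta]$ with $R(f)\in\kL(H^1(\R)^2,L_2(\R)^2)$, obtained via $H^1(\R)\hookrightarrow L_\infty(\R)$. With only this, the estimate
\[
\|\beta'\|_2\leq C\big(\|\varphi'\|_2+\|R(\lambda f)[\beta]\|_2\big)\leq C\|\varphi'\|_2+C\|R(\lambda f)\|_{\kL(H^1,L_2)}\,\|\beta\|_{H^1}
\]
does not close: the coefficient in front of $\|\beta\|_{H^1}$ is not small, and no interpolation is available to absorb it, so you obtain no uniform a priori bound and the method of continuity cannot be invoked. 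What is actually needed — and what the paper proves for its commutator $T$ — is the strictly lower-order bound $\|R(f)[\beta]\|_2\leq C\|\beta\|_{H^{s-1}}$ for some fixed $s\in(3/2,2)$. This uses Lemma~\ref{L:MP0}~(ii) and the \emph{sharper} embedding $H^{s-1}(\R)\hookrightarrow L_\infty(\R)$ (valid because $s-1>1/2$), not just $H^1\hookrightarrow L_\infty$, to control terms of the type $B^0_{n,m}(f)[f''\beta_i]$ by $\|\beta\|_{H^{s-1}}$; then one interpolates $\|\beta\|_{H^{s-1}}\leq\varepsilon\|\beta\|_{H^1}+C_\varepsilon\|\beta\|_2$, uses Young, and combines with the $L_2$ estimate of Theorem~\ref{T:L2spec} to obtain $\|\beta\|_{H^1}\leq C\|\varphi\|_{H^1}$ uniformly in $\lambda$. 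The same remark applies to your bootstrap for $k=2$, which depends on the $H^1$-invertibility of $\pm\tfrac12-\bD(f)^*$ and thus inherits the gap.
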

\begin{proof}
Fix $f\in H^{k+1}(\R)$. 
The representation \eqref{DFB} and Lemma~\ref{L:MP0}~(iv) then immediately imply that~$\bD(f)$ and $\bD(f)^*$ belong to $\kL(H^k(\R)^2)$.

Let first $k=1$.  Using \eqref{FDER}, we compute that the components of 
\begin{align*}
T[\beta]:=(\bD(f)[\beta])'-\bD(f)[\beta'], \qquad\beta=(\beta_1,\beta_2)^\top\in H^1(\R)^2,
\end{align*}
  are (finite) linear combination of terms of the form
\begin{align*}
B_{n,m}(f,\ldots,f)[f',f,\ldots,f,f'^{\ell}\beta_i]\quad\text{and} \qquad B_{n,m}^0(f)[f''\beta_i]
\end{align*}
with $n,\,m\leq 5$, $\ell=0,\, 1,$ and $i=1,\, 2$.
Choosing $s\in(3/2,2)$, it follows from Lemma~\ref{L:MP0}~(i)-(ii) that there exists a constant $C_1>0$ such that
\[
\|T[\beta]\|_2\leq C_1\|\beta\|_{H^{s-1}}, \qquad\beta=(\beta_1,\beta_2)^\top\in H^1(\R)^2.
\]
This property together with \eqref{DEest} now leads to
\begin{align*}
\|(\pm1/2-\bD(f))[\beta]\|_{H^1}^2&=\|(\pm1/2-\bD(f))[\beta]\|_{2}^2+\|((\pm1/2-\bD(f))[\beta])'\|_{2}^2\\[1ex]
&=\|(\pm1/2-\bD(f))[\beta]\|_{2}^2+\frac{1}{2}\|(\pm1/2-\bD(f))[\beta']\|_{2}^2-\|T[\beta]\|_2^2\\[1ex]
&\geq \frac{1}{2C_0^2} \|\beta\|_{H^1}^2 - C_1^2\|\beta\|_{H^{s-1}}^2.
\end{align*}
The latter estimate, an interpolation argument, and Young's inequality imply  there exists a further constant~${C_2=C_2(\delta)\geq1}$ such that
 \begin{align*}
C_2\big(\|\beta\|_2^2+\|(\pm1/2-\bD(f))[\beta]\|_{H^1}^2\big)\geq  \|\beta\|_{H^1}^2  
\end{align*}
for all $\beta\in H^1(\R)^2$.
This estimate combined with \eqref{DEest} now yields
 \begin{align*}
C_2(C_0^2+1)\|(\pm1/2-\bD(f))[\beta]\|_{H^1}^2\geq  \|\beta\|_{H^1}^2  
\end{align*}
for all $\beta\in H^1(\R)^2$. 
The invertibility of $\pm1/2-\bD(f)$ in $\kL(H^1(\R)^2)$ follows from this estimate and  the invertibility property in $\kL(L_2(\R)^2)$.
The invertibility of  $\pm1/2-\bD(f)^*$ in $\kL(H^1(\R)^2)$ may be established by using the same arguments and therefore we omit  the details.

Finally, when $k=2$, the invertibility of $\pm1/2-\bD(f)$ and $\pm1/2-\bD(f)^*$ in $\kL(H^2(\R)^2)$ may be obtained by arguing along the same lines as above
 (see the proof of \cite[Theorem 4.5]{MP2022} for some details).
\end{proof}

The next invertibility result is used in Section~\ref{Sec:4.4}  when we consider our evolution problems
in~${H^{s-1}(\R)}$ with~${s\in(3/2,2)}$. 
\begin{lemma}\label{L:4}
Given  $\delta\in(0,1)$ and $s\in(3/2,2),$ there exists a positive constant~${C=C(\delta,s)\geq 1}$ such that 
for all~${f\in H^s(\R)}$ with~${\|f \|_{H^s}\leq 1/\delta}$ and all $\beta\in H^{s-1}(\R)^2$ we have
\begin{align}\label{DEests}
C\min\Big\{\Big\|\Big(\pm\frac{1}{2}-\bD(f)\Big)[\beta]\Big\|_{H^{s-1}},\Big\|\Big(\pm\frac{1}{2}-\bD(f)^\ast\Big)[\beta]\Big\|_{H^{s-1}}\Big\}\geq  \|\beta\|_{H^{s-1}}.
\end{align}

Moreover, $\pm1/2-\bD(f)^\ast$ and $\pm1/2-\bD(f)$ are invertible in $\kL(H^{s-1}(\R)^2)$.
\end{lemma}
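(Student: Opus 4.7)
The plan is to follow the scheme used in Lemma~\ref{L:3}, but adapted to the fractional order $s-1\in(1/2,1)$, and then deduce invertibility from the estimate together with the $L_2$-theory of Theorem~\ref{T:L2spec}.

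For the setup, the Sobolev embedding $H^s(\R)\hookrightarrow {\rm BUC}^1(\R)$ (valid since $s>3/2$) gives $\|f'\|_\infty\le c_0\|f\|_{H^s}\le c_0/\delta$, so Theorem~\ref{T:L2spec} furnishes an $L_2$-invertibility constant depending only on $\delta$ and $s$. Furthermore, the representation \eqref{DFB}, the fact that $H^{s-1}(\R)$ is a Banach algebra (since $s-1>1/2$), and Lemma~\ref{L:MP0}~(iii) together show that $\bD(f),\bD(f)^\ast\in\kL(H^{s-1}(\R)^2)$ with operator norms bounded in terms of $\|f\|_{H^s}$.

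For the key estimate, I would introduce the Bessel operator $\Lambda^\sigma:=(1-\partial_x^2)^{\sigma/2}$, so that $\|\cdot\|_{H^\sigma}=\|\Lambda^\sigma\cdot\|_2$. Applying the $L_2$-estimate \eqref{DEest} to $\Lambda^{s-1}\beta\in L_2(\R)^2$ gives
\[
\|\Lambda^{s-1}\beta\|_2\le C(\delta)\bigl\|\Lambda^{s-1}\bigl((\pm\tfrac{1}{2}-\bD(f))[\beta]\bigr)\bigr\|_2 + C(\delta)\bigl\|[\Lambda^{s-1},\bD(f)]\beta\bigr\|_2.
\]
The crucial ingredient is a commutator estimate
\[
\bigl\|[\Lambda^{s-1},\bD(f)]\beta\bigr\|_2\le C(\|f\|_{H^s})\,\|\beta\|_{H^{s-1-\varepsilon}}\qquad\text{for some $\varepsilon>0$,}
\]
which, via the representation \eqref{DFB}, reduces to commutators of $\Lambda^{s-1}$ with multiplications by $f'\in H^{s-1}$ (handled by Kato--Ponce type inequalities) and with the singular operators $B^0_{n,m}(f)$ (handled by exploiting the cancellations in the principal-value kernels that underpin the $H^{s-1}$-bounds in Lemma~\ref{L:MP0}~(ii)--(iii)). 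Granting this, combining it with the $L_2$-estimate of Theorem~\ref{T:L2spec}, interpolating between $L_2$ and $H^{s-1}$, and applying Young's inequality to absorb the lower-order term yields \eqref{DEests}. The argument for $\bD(f)^\ast$ is symmetric.

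Once \eqref{DEests} is established for both operators, injectivity and closed range on $H^{s-1}(\R)^2$ are immediate. For surjectivity, given $\phi\in H^{s-1}(\R)^2$, Theorem~\ref{T:L2spec} supplies a unique $\beta\in L_2(\R)^2$ with $(\pm1/2-\bD(f))[\beta]=\phi$; approximating $f$ by $f_n\in H^3(\R)$ converging to $f$ in $H^s(\R)$, Lemma~\ref{L:3} provides $\beta_n\in H^1(\R)^2\subset H^{s-1}(\R)^2$ solving the equation with $f$ replaced by $f_n$, and the Lipschitz properties \eqref{CDD*}, \eqref{Bnmk}, together with Lemma~\ref{L:MP0}~(iii) and the uniformity of \eqref{DEests} in $\delta$, allow one to pass to the limit and conclude $\beta\in H^{s-1}(\R)^2$; the same argument applies to $\bD(f)^\ast$. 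The principal obstacle is the fractional commutator estimate itself: unlike the integer-order commutator in Lemma~\ref{L:3}, which is available in the closed form \eqref{FDER}, the operator $[\Lambda^{s-1},\bD(f)]$ is genuinely nonlocal, and since only $f\in H^s(\R)$ is available (not $H^2(\R)$), the required $\varepsilon$-gain must be extracted from the kernel cancellations through a careful analysis of the building blocks $B^0_{n,m}(f)$.
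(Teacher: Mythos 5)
Your proposal takes a different route from the paper's and has a gap at its crux. The paper's own proof is terse---it refers to Lemma~\ref{L:MP0}, Theorem~\ref{T:L2spec}, Lemma~\ref{L:3}, and \cite[Theorem~4.2]{MP2022}---and the cited argument proceeds not via a fractional commutator but via the localization-and-freezing-of-coefficients technique that is standard in this circle of papers for establishing invertibility of $B^0_{n,m}$-type operators in $H^{s-1}(\R)$ when $f\in H^{s}(\R)$ only: one covers $\R$ by small intervals, replaces $f$ by an affine approximation on each (so the frozen operator is an explicit Fourier multiplier), and absorbs the errors via a partition of unity together with the continuity bounds of Lemma~\ref{L:MP0}~(iii). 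In Lemma~\ref{L:3} the integer-order commutator $T[\beta]=(\bD(f)[\beta])'-\bD(f)[\beta']$ is computable in closed form from \eqref{FDER}; you correctly note that this breaks down for the fractional order $s-1\in(1/2,1)$ and propose commuting with $\Lambda^{s-1}$ instead.

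The difficulty is that the commutator estimate $\|[\Lambda^{s-1},\bD(f)]\beta\|_2\lesssim\|\beta\|_{H^{s-1-\varepsilon}}$ is asserted, not proven, and it does not follow from the tools you cite. Kato--Ponce controls $[\Lambda^{s-1},M_{f'}]$ only when $f''\in L_\infty$ or $f'$ has regularity exceeding $H^{s-1}$, neither of which holds for $f\in H^s(\R)$ with $s<2$; a Bony paraproduct argument using $f'\in C^{s-3/2}_*$ could in principle yield a gain $\varepsilon\in(0,s-3/2)$ for the multiplication part, but the genuinely nonlocal commutators $[\Lambda^{s-1},B^0_{n,m}(f)]$ are another matter. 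The kernels of these operators depend nonlinearly on $f$, Lemma~\ref{L:MP0}~(ii)--(iii) provide boundedness rather than commutator-type cancellation, and extracting an $\varepsilon$-gain from them would require a new kernel analysis essentially comparable in difficulty to the lemma itself. You flag this yourself as ``the principal obstacle,'' and indeed it is: as written, your proposal is a plan with its central estimate missing. Your surjectivity argument is also circular unless \eqref{DEests} is already in hand, but that part is harmless once the a priori estimate is secured.
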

\begin{proof}
As a direct consequence of Lemma~\ref{L:MP0}~(iii) we get $\bD(f),\,\bD(f)^*\in\kL(H^{s-1}(\R))^2$.
The remaining claims follow  from Lemma~\ref{L:MP0}, \eqref{DEest}, and Theorem~\ref{T:L2spec}, by arguing as in the proof of \cite[Theorem~4.2]{MP2022} and Lemma~\ref{L:3}.
\end{proof}

 \section{ Equivalent formulation and proof of the main results}\label{Sec:4}

 In this section we formulate the quasistationary Stokes flow \eqref{STOKES} as an evolution problem for $f$. 
 The main step is established in  Corollary~\ref{C:1},  which provides the unique solvability of  Eq.~\eqref{invertcom},  as announced in Theorem~\ref{T:1}.
 Using this,  in Section~\ref{Sec:4.2} we derive the evolution problem \eqref{NNEP1} for the Stokes flow~\eqref{STOKES}. 
 This is in analogy to the problem \eqref{NNEP2} obtained in \cite{MP2022} for the corresponding 
 two-phase Stokes flow \eqref{2STOKES}. 
 In Section~\ref{Sec:4.3}, Problem~\eqref{NNEP1} is then shown to 
 be the  limit $\mu_+\to0$ of \eqref{NNEP2}. 
This is based on a commutator type identity provided in Proposition~\ref{P:2}. 
Finally, in  Section~\ref{Sec:4.4}, we introduce the general evolution problem \eqref{NNEP} with parameter $\mu_+\geq 0$. This formulation enables us to treat both one- and two-phase flows simultaneously
 for initial data in~$H^s(\R)$, ${s\in(3/2,2)}$, and to establish the main results.

 \subsection{A relation connecting $(\bD(f)[\beta])'$ and $\bD(f)^*[\beta']$}\label{Sec:4.1}\ 
The following identity is,  besides Lemma~\ref{L:3}, the main ingredient in the proof of Corollary~\ref{C:1}.  
\begin{lemma}\label{L:A1}
 Given $f\in H^{\tau}(\R)$, $\tau\in(3/2,2)$, and $\beta\in H^1(\R)^2$, we have~${\bD(f)[\beta]\in H^1(\R)^2}$ with 
 \begin{equation}\label{comder}
 (\bD(f)[\beta])'=-\bD(f)^*[\beta'].
 \end{equation}
 \end{lemma}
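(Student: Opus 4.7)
The key to the proof lies in a pointwise identity for the kernels involved. Writing the double layer as $\bD(f)[\beta](\xi)=\frac{1}{\pi}\PV\int_\R K(\xi,s)\beta(s)\,ds$ with matrix-valued kernel $K(\xi,s)=\frac{r_1f'(s)-r_2}{|r|^4}(r\otimes r)$ and correspondingly $\bD(f)^*[\beta](\xi)=\frac{1}{\pi}\PV\int_\R K^*(\xi,s)\beta(s)\,ds$ with $K^*(\xi,s)=\frac{-r_1f'(\xi)+r_2}{|r|^4}(r\otimes r)$, my plan is to first prove the pointwise identity
\[
\partial_\xi K(\xi,s)=\partial_s K^*(\xi,s)\quad\text{for $\xi\neq s$.}
\]
This follows from a direct computation using $\partial_\xi r=(1,f'(\xi))^\top$, $\partial_s r=-(1,f'(s))^\top$, together with Euler's homogeneity relation $r\cdot\nabla_r\phi(r)=-2\phi(r)$ for the $(-2)$-homogeneous matrix-valued function $\phi(r):=(r\otimes r)/|r|^4$. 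The algebraic terms of degree zero in $\nabla_r\phi$ contribute $2(f'(s)-f'(\xi))\phi$, while the terms involving $\nabla_r\phi$ collect to $(f'(s)-f'(\xi))(r\cdot\nabla_r\phi)=-2(f'(s)-f'(\xi))\phi$, giving cancellation.

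With the pointwise identity in hand, I would verify \eqref{comder} first in the smooth setting: assume $f\in{\rm BUC}^\infty(\R)$ and $\beta\in{\rm C}_c^\infty(\R)^2$. Work with truncated operators $\bD_\varepsilon(f)$ restricting to $|s-\xi|>\varepsilon$, so that differentiation under the integral and integration by parts in $s$ are elementary and produce
\[
(\bD_\varepsilon(f)[\beta])'(\xi)=-\bD_\varepsilon(f)^*[\beta'](\xi)+\frac{1}{\pi}\bigl\{[K+K^*](\xi,\xi-\varepsilon)\beta(\xi-\varepsilon)-[K+K^*](\xi,\xi+\varepsilon)\beta(\xi+\varepsilon)\bigr\}.
\]
The scalar factor of $K+K^*$ equals $r_1(f'(s)-f'(\xi))/|r|^4$, which for $f\in C^2$ contributes an additional $\varepsilon$ of smallness near the diagonal. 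Consequently $(K+K^*)(\xi,\xi\pm\varepsilon)$ admits a common finite limit (symmetric in $\pm\varepsilon$), and the boundary contribution vanishes as $\varepsilon\to 0$. Letting $\varepsilon\to 0$ in both integrals yields \eqref{comder} in the classical sense for smooth data.

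The final step extends the identity by density. Given $f\in H^\tau(\R)$ and $\beta\in H^1(\R)^2$, choose approximations $f_n\in{\rm BUC}^\infty(\R)\cap H^3(\R)$ with $f_n\to f$ in $H^\tau(\R)$ (hence also in ${\rm BUC}^1(\R)$ by Sobolev embedding, with uniform bounds) and $\beta_n\in{\rm C}_c^\infty(\R)^2$ with $\beta_n\to\beta$ in $H^1(\R)^2$. Using \eqref{CDD*} and Lemma~\ref{L:MP0}(i), one obtains $\bD(f_n)[\beta_n]\to\bD(f)[\beta]$ and $\bD(f_n)^*[\beta_n']\to\bD(f)^*[\beta']$ in $L_2(\R)^2$. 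Passing to the limit in the distributional derivative yields $(\bD(f)[\beta])'=-\bD(f)^*[\beta']$ in $\mathcal{D}'(\R)^2$, and since the right-hand side lies in $L_2(\R)^2$ by Theorem~\ref{T:L2spec}, we conclude $\bD(f)[\beta]\in H^1(\R)^2$.

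The main obstacle is the careful asymptotic analysis of the boundary terms $(K+K^*)(\xi,\xi\pm\varepsilon)\beta(\xi\pm\varepsilon)$ as $\varepsilon\to 0$: one must both identify the leading symmetric term (which cancels between $+\varepsilon$ and $-\varepsilon$) and control the error uniformly in $\xi$ on the support of $\beta$. The essential cancellation relies on the specific structure of the Stokes double layer kernel, in particular on the Taylor expansion $f'(s)-f'(\xi)=(s-\xi)f''(\xi)+o(s-\xi)$ combining with the explicit form of $\phi(r)$; this is the reason why the identity~\eqref{comder} encodes a nontrivial analytic cancellation and not merely a formal integration by parts.
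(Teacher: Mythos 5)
Your proof is correct, and it takes a genuinely different route from the paper's.

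The paper's argument rewrites the kernel as a total $s$-derivative: with $S(\xi,s)=(f(\xi)-f(s))/(\xi-s)$, one has $K(\xi,s)=-\partial_s\,G(S(\xi,s))$ where $G$ is a primitive of $r\mapsto\frac{1}{(1+r^2)^2}\left(\begin{smallmatrix}1&r\\r&r^2\end{smallmatrix}\right)$. Integration by parts then gives $\bD(f)[\beta](\xi)=\int_\R G(S(\xi,s))\beta'(s)\,ds$, whose integrand has no principal-value singularity at all; a short lemma on differentiating under the integral sign (justified by the estimate $\|\partial_\xi(G\circ S)\|_F\leq C|\xi-s|^{-(\tau-3/2)}$) then produces the identity directly for $f\in H^\tau$ and $\beta\in{\rm C}_c^\infty$, with only $\beta$ approximated at the end. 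Your argument instead proves the pointwise identity $\partial_\xi K(\xi,s)=\partial_s K^*(\xi,s)$ (which I verified: the $O(1)$ terms and the $r\cdot\nabla_r\phi=-2\phi$ terms cancel exactly), differentiates a truncated operator $\bD_\varepsilon$, integrates by parts, and then must track the boundary terms $(K+K^*)(\xi,\xi\mp\varepsilon)$. Because $K+K^*=\frac{r_1(f'(s)-f'(\xi))}{|r|^4}(r\otimes r)$, these boundary terms are individually only $O(1)$, and their cancellation relies on the second-order Taylor expansion $f'(\xi\pm\varepsilon)-f'(\xi)=\pm\varepsilon f''(\xi)+o(\varepsilon)$ — hence you genuinely need $f\in C^2$ in the intermediate step, which forces you to smooth $f$ as well as $\beta$ in the density argument. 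That works (the embedding $H^\tau(\R)\hookrightarrow W^1_\infty(\R)$ for $\tau>3/2$ together with \eqref{CDD*} gives $\bD(f_n)\to\bD(f)$ in operator norm on $L_2$), but it is more work and less robust than the paper's primitive trick, which sidesteps the boundary-term cancellation entirely. One small inaccuracy: to conclude $\bD(f)^*[\beta']\in L_2(\R)^2$ at the end you should cite \eqref{CDD*} or Lemma~\ref{L:MP0}~(i) for boundedness, not Theorem~\ref{T:L2spec}, which is an invertibility statement.
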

In order to prepare the proof of Lemma~\ref{L:A1}, which is presented below, we set
\[
D:=\{(\xi,\xi)\in\R^2\,:\,\xi\in\R\}
\]
and define 
\begin{equation}\label{kernel}
K(\xi,s):=\frac{r_1f'(s)-r_2}{|r|^4}
\left(\begin{array}{cc}
r_1^2&r_1r_2\\
r_1r_2&r_2^2
\end{array}\right),\qquad (\xi,s)\in\R^2\setminus D,
\end{equation}
where $r=r(\xi,s)$ is defined  in \eqref{rxs}.
The double layer potential $\bD(f)$ and its $L_2$-adjoint~$\bD(f)^*$ can now  be expressed as follows:
\begin{align*}
\bD(f)[\beta](\xi)&=\int_\R K(\xi,s)\beta(s)\,ds,\\
\bD(f)^\ast[\beta](\xi)&=\int_\R K(s,\xi)^\top\beta(s)\,ds=\int_\R K(s,\xi)\beta(s)\,ds
\end{align*}
for $\beta\in L_2(\R)^2$, see \eqref{defD}.
Both integrals converge when $f\in H^\tau(\R)$, with $\tau\in(3/2,2),$ since  there exists a constant $C>0$ such that 
\begin{equation}\label{eq:BBG}
\|K(\xi,s)\|_F\leq C |\xi-s|^{\tau-3/2} \qquad\text{for all $(\xi,\, s)\in\R^2\setminus D$.}
\end{equation}
Here, $\|\cdot\|_F$ is  again the Frobenius norm.
Motivated by \eqref{eq:BBG}, we establish the following auxiliary result.

\begin{lemma}\label{exchange}
Let  $A\in {\rm C}(\R^2)\cap {\rm C}^1(\R^2\setminus D)$, $u\in {\rm C}_c(\R)$, and assume  there exist constants~${C>0}$ and $\alpha\in(0,1)$ such that
\[|\partial_\xi A(\xi,s)|\leq C|\xi-s|^{-\alpha} \quad\text{for all $(\xi,s)\in\R^2\setminus D$.}\]
Then, the function $\psi:\R\longrightarrow\R$  given by
\[\psi(\xi):=\int_{\R}A(\xi,s)u(s)\,ds,\qquad \xi\in\R,\]
belongs to ${\rm C}^1(\R)$ and 
\[\psi'(\xi)=\int_\R\p_\xi A(\xi,s)u(s)\,ds,\qquad \xi\in\R.\]
\end{lemma}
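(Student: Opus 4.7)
The plan is to recognize this as a standard differentiation-under-the-integral-sign statement, where the only nuance is the singularity of $\partial_\xi A$ along the diagonal $D$. The integrability of $|\xi-s|^{-\alpha}$ for $\alpha<1$ is exactly what makes everything go through. I will split the argument into a continuity claim for the candidate derivative and a Fubini-type reduction of the difference quotient to an average of that candidate.

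First I would define
\[
\phi(\xi):=\int_\R \partial_\xi A(\xi,s)u(s)\,ds
\]
and show that $\phi\in {\rm C}(\R)$. Fix $\xi_0$ and $\varepsilon>0$. Because $\supp u$ is compact and $|\partial_\xi A(\xi,s)|\leq C|\xi-s|^{-\alpha}$ with $\alpha<1$, the integrand is dominated by $C\|u\|_\infty|\xi-s|^{-\alpha}\mathbf 1_{\supp u}(s)$, which is locally integrable in $s$. Splitting the integral over $\{|s-\xi_0|<\delta\}$ and its complement, the first piece is $\leq C'\|u\|_\infty\delta^{1-\alpha}$ uniformly in $\xi$ near $\xi_0$, while on the complement $\partial_\xi A$ is continuous and bounded, so dominated convergence yields convergence of the second piece. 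Choosing $\delta$ small then $\xi$ close to $\xi_0$ makes $|\phi(\xi)-\phi(\xi_0)|<\varepsilon$.

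Next I would reduce the difference quotient of $\psi$ to an average of $\phi$. Since $A\in {\rm C}(\R^2)\cap{\rm C}^1(\R^2\setminus D)$ and $\partial_\xi A(\cdot,s)$ is locally integrable (the singularity at $\xi=s$ is of order $|\xi-s|^{-\alpha}$ with $\alpha<1$), the function $A(\cdot,s)$ is absolutely continuous on any compact interval of $\R$, so
\[
A(\xi_0+h,s)-A(\xi_0,s)=\int_{\xi_0}^{\xi_0+h}\partial_\xi A(\xi,s)\,d\xi
\]
for every $s\in\R$. Dividing by $h$, multiplying by $u(s)$, and integrating, the bound $|\partial_\xi A(\xi,s)u(s)|\leq C\|u\|_\infty|\xi-s|^{-\alpha}\mathbf 1_{\supp u}(s)$ (which is integrable over $[\xi_0,\xi_0+h]\times\R$, by Tonelli and the same $\delta^{1-\alpha}$-type estimate as before) justifies Fubini and yields
\[
\frac{\psi(\xi_0+h)-\psi(\xi_0)}{h}=\frac{1}{h}\int_{\xi_0}^{\xi_0+h}\phi(\xi)\,d\xi.
\]
By the continuity of $\phi$ established above, the right-hand side tends to $\phi(\xi_0)$ as $h\to 0$, so $\psi$ is differentiable at $\xi_0$ with $\psi'(\xi_0)=\phi(\xi_0)$. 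Continuity of $\psi'$ is then immediate.

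The only real obstacle is justifying the use of Fubini in the presence of the singular kernel; this is precisely where the hypothesis $\alpha<1$ is used, giving the elementary estimate $\int_{|t|<R}|t|^{-\alpha}\,dt=\tfrac{2R^{1-\alpha}}{1-\alpha}$ that controls both the Fubini integrability and the continuity modulus of $\phi$. Once these are in hand, no further delicate analysis is required.
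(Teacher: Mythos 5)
Your proof is correct but follows a different route from the paper's. The paper truncates the integral away from the diagonal, defining $\psi_\varepsilon(\xi)=\int_{|\xi-s|>\varepsilon}A(\xi,s)u(s)\,ds$; this is $C^1$ because the excised kernel is, and $\psi_\varepsilon'$ picks up boundary terms $\pm A(\xi,\xi\mp\varepsilon)u(\xi\mp\varepsilon)$ from the moving endpoints. One then shows $\psi_\varepsilon\to\psi$ and $\psi_\varepsilon'\to\varphi$ uniformly on compacta and concludes $\psi\in C^1$, $\psi'=\varphi$ by the closedness of differentiation. You instead use the Lebesgue fundamental theorem of calculus for $A(\cdot,s)$ (valid here because $A(\cdot,s)$ is continuous, $C^1$ off $\{s\}$, and $\partial_\xi A(\cdot,s)$ is locally integrable) together with Fubini to express the difference quotient of $\psi$ as the integral mean $\frac{1}{h}\int_{\xi_0}^{\xi_0+h}\varphi$, and then invoke continuity of $\varphi$. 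Both arguments hinge on the integrability of $|\xi-s|^{-\alpha}$ and are of comparable length; yours avoids the truncation and the closedness argument at the (minor) cost of having to justify the FTC for a function that is merely $C^1$ away from a single point, and the Fubini step. Either treatment is acceptable.
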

\begin{proof}
Given~${\e\in(0,1)}$, let $\psi_\e\in {\rm C}(\R)$ be given by 
\[\psi_\e(\xi):=\int_{\{|\xi-s|>\e\}} A(\xi,s)u(s)\,ds.\]
Since $u$ has compact support and $A\in {\rm C}(\R^2)\cap {\rm C}^1(\R^2\setminus D)$, we have $\psi_\e\in {\rm C}^1(\R)$  and $\psi_\e\to \psi$ for $ \e\to0$ uniformly on compact subsets of $\R$. 
By closedness of the differentiation, the lemma is proved once we show
\[\psi_\e'\to \int_\R\p_\xi A(\cdot,s)u(s)\,ds:=\varphi\quad\text{for $\e\to 0$,}\]
uniformly on compact subsets of $\R$. 
Indeed, given $\xi\in\R$, it holds that
\[\psi_\e'(\xi)=\int_{\{|\xi-s|>\e\}} \p_\xi A(\xi,s)u(s)\,ds+A(\xi,\xi-\e)u(\xi-\e)-A(\xi,\xi+\e)u(\xi+\e),\]
and therefore
\[|(\psi_\e'-\varphi)(\xi)|\leq C\int_{\{|\xi-s|<\e\}}|\xi-s|^{-\alpha}\,ds+|A(\xi,\xi-\e)u(\xi-\e)-A(\xi,\xi+\e)u(\xi+\e)|,\]
which implies the announced convergence. 
\end{proof}

We are now in a position to prove Lemma~\ref{L:A1}.
\begin{proof}[Proof of Lemma~\ref{L:A1}]
 We first establish the result for $\beta\in {\rm C}_c^\infty(\R)^2$.
To this end we define the function $S:=S(f)\in {\rm C}(\R^2)$ by
\[S(f)(\xi,s):=\left\{
\begin{array}{ccl}
\displaystyle\frac{f(\xi)-f(s)}{\xi-s},&&\text{if $\xi\neq s$,}\\[2ex]
f'(\xi),&&\text{if $\xi=s$.}
\end{array}\right.\]
The function   $S$ is continuously differentiable in $\R^2\setminus D$, where again~${D:=\{(\xi,\xi)\,:\,\xi\in\R\}}$,  with partial derivatives expressed, in the notation~\eqref{rxs}, as
\begin{equation*}
\p_\xi S(\xi,s)=\frac{f'(\xi)r_1-r_2}{r_1^2}\qquad\text{and}\qquad\p_sS(\xi,s)=\frac{-f'(s)r_1+r_2}{r_1^2}.
\end{equation*}
Now (with arguments $(\xi,s)$ partly suppressed), the kernel $K$ defined in \eqref{kernel} can be expressed as
\begin{align*}
K(\xi,s)&=\frac{1}{(1+S^2)^2}
\left(\begin{array}{cc}
1&S\\
S&S^2
\end{array}\right)\,\frac{r_1f'(s)-r_2}{r_1^2}
=-\frac{1}{(1+S^2)^2}
\left(\begin{array}{cc}
1&S\\
S&S^2
\end{array}\right)\p_s S(\xi,s)\\[1ex]
&=-\p_s G\circ S(\xi,s),
\end{align*}
where $G\in {\rm C}^\infty(\R,\R^{2\times2})$ is a primitive of the smooth matrix valued function 
\[\Big[r\longmapsto\frac{1}{(1+r^2)^2}
\left(\begin{array}{cc}
1&r\\
r&r^2
\end{array}
\right)
\Big]:\R\longrightarrow\R^{2\times 2}.\]
Thus, after integration by parts, 
\[\bD(f)[\beta](\xi)=\int_\R G(S(\xi,s))\beta'(s)\,ds.\]
We next observe that $A:=G\circ S$ is continuous on $\R^2$ and  $A\in {\rm C}^1(\R^2\setminus D)$ with
\begin{align*}
\p_\xi A(\xi,s)&=\frac{1}{(1+S^2)^2}
\left(\begin{array}{cc}
1&S\\
S&S^2
\end{array}\right)\p_\xi S(\xi,s)=-
\frac{1}{(1+S^2)^2}
\left(\begin{array}{cc}
1&S\\
S&S^2
\end{array}\right)\,\frac{-r_1f'(\xi)+r_2}{r_1^2}\\[1ex]
&=-K(s,\xi)=-K(s,\xi)^\top.
\end{align*}
Moreover, since  $f\in {\rm BUC}^{\tau-1/2}(\R)$, setting $\alpha:=\tau-3/2\in(0,1)$, we  obtain from~\eqref{eq:BBG} that there exists $C>0$ such that
\begin{equation*}
\|\p_\xi A(\xi,s)\|_F\leq C |\xi-s|^{-\alpha} \qquad\text{for all $(\xi,\, s)\in\R^2\setminus D$.}
\end{equation*}
We may now infer from  Lemma \ref{exchange}  that  $\bD(f)[\beta]\in{\rm C^1}(\R)$ and
\[(\bD(f)[\beta])'(\xi)=\int_\R\p_\xi A(\xi,s) \beta'(s)\,ds=-\int_\R K(s,\xi)^\top\beta(s)\,ds=-\bD(f)^\ast[\beta'](\xi),\quad \xi\in\R.\]

Let now $\beta\in H^1(\R)^2$ arbitrary, and let $(\beta_n)$ be a sequence in ${\rm C}^\infty_c(\R)^2$ with 
$\beta_n\to\beta$ in~${H^1(\R)^2}$. 
Then, by $L_2$-continuity of $\bD(f)$, see~\eqref{CDD*},
\[\bD(f)[\beta_n]\to\bD(f)[\beta]\quad\text{in $L_2(\R)^2$}\]
and, by $L_2$-continuity of $\bD(f)^\ast$, see~\eqref{CDD*},
\[(\bD(f)[\beta_n])'= -\bD(f)^\ast[\beta'_n]\to-\bD(f)^\ast[\beta']\quad\text{in $L_2(\R)^2$}.\]
Now the result follows by closedness of the derivative operator.
\end{proof}

We are now in a position to establish the solvability of \eqref{invertcom}.
 \begin{cor}\label{C:1}
 Given $f\in H^3(\R)$ and $g\in H^2(\R)^2$, let $\beta\in H^2(\R)^2$ denote the unique solution to the equation
 \begin{equation}\label{Dual1}
\Big(\frac{1}{2}+ \bD(f)\Big)[\beta]= g.
\end{equation}
Then $\alpha:=\beta'\in H^1(\R)^2$ is the unique solution to
 \begin{equation}\label{Dual2}
\Big(\frac{1}{2}- \bD(f)^*\Big)[\alpha]=g'.
\end{equation} 
 \end{cor}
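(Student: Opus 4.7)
The plan is to view the corollary as a straightforward consequence of Lemma~\ref{L:A1}, combined with the invertibility statements in Lemma~\ref{L:3}. Since $f\in H^3(\R)\subset H^2(\R)$, Lemma~\ref{L:3} (with $k=2$) guarantees that $1/2+\bD(f)$ is invertible in $\kL(H^2(\R)^2)$, so there is indeed a unique $\beta\in H^2(\R)^2$ solving \eqref{Dual1}. Similarly, the same lemma (with $k=1$) gives that $1/2-\bD(f)^*$ is invertible in $\kL(H^1(\R)^2)$, which will yield uniqueness of the solution $\alpha\in H^1(\R)^2$ to \eqref{Dual2}.

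For existence, I would differentiate \eqref{Dual1} componentwise. Since $\beta\in H^2(\R)^2$, we have $\beta'\in H^1(\R)^2$, and $g\in H^2(\R)^2$ gives $g'\in H^1(\R)^2$. The key point is that $\bD(f)[\beta]\in H^1(\R)^2$ with
\[
(\bD(f)[\beta])'=-\bD(f)^*[\beta'],
\]
which is exactly the content of Lemma~\ref{L:A1} (applicable since $f\in H^3(\R)\subset H^\tau(\R)$ for any $\tau\in(3/2,2)$ and $\beta\in H^2(\R)^2\subset H^1(\R)^2$). Taking the derivative of both sides of \eqref{Dual1} and substituting the above identity produces
\[
\frac{1}{2}\beta' - \bD(f)^*[\beta']=g',
\]
so that $\alpha:=\beta'$ solves \eqref{Dual2}.

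Finally, uniqueness follows by noting that any solution $\alpha\in H^1(\R)^2$ to \eqref{Dual2} is uniquely determined because $1/2-\bD(f)^*\in\kL(H^1(\R)^2)$ is an isomorphism by Lemma~\ref{L:3}. There is no real obstacle here: the entire argument is a short chain combining invertibility (Lemma~\ref{L:3}) with the commutation identity~\eqref{comder} (Lemma~\ref{L:A1}). The only subtlety worth flagging is verifying that the regularity hypotheses of Lemma~\ref{L:A1} are met, which they are since $H^3(\R)$ embeds into $H^\tau(\R)$ for $\tau\in(3/2,2)$ and $\beta\in H^2(\R)^2$ certainly lies in $H^1(\R)^2$.
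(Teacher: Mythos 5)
Your argument matches the paper's proof, which simply states that the corollary is a direct consequence of Lemma~\ref{L:3} and Lemma~\ref{L:A1}; you have correctly filled in the details (invertibility for existence and uniqueness, differentiation of \eqref{Dual1} together with the identity \eqref{comder}), and the regularity hypotheses are checked correctly.
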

 \begin{proof}
 The claim is a direct consequence of Lemma~\ref{L:3} and Lemma~\ref{L:A1}.
 \end{proof}

  \subsection{The evolution problem for $f$}\label{Sec:4.2}
  Let $ T_+>0$ and $(f,v,p)$ be a solution to \eqref{STOKES} such that for all~$t\in(0,T_+)$ we have $f(t)\in H^3(\R)$ and
 \[
v(t)\in {\rm C}^2(\0(t))\cap {\rm C}^1 (\ov{\0(t)}),\quad  p(t)\in {\rm C}^1(\0(t))\cap {\rm C} (\ov{\0(t)}),\quad   v(t)|_{\G(t)} \circ\Xi_{f(t)}\in H^2(\R)^2.
 \] 
  Define $\beta(t):=\beta(f(t))\in H^{2}(\R)^2$ by
\begin{equation}\label{Phi2}
 \beta(t):= \Big(\frac{1}{2}+ \bD(f(t))\Big)^{-1}[g(t)],
\end{equation}
with $g(t)=g(f(t))$  as defined in \eqref{defgn}.
 Then, by Corollary~\ref{C:1}, $\beta(t)'$ is the unique solution to
\[\Big(\frac{1}{2}- \bD(f(t))^\ast\Big)[\beta(t)']=g(t)',\]
with the prime denoting the spatial derivative along $\R$. 
  Theorem~\ref{T:1}, in particular  \eqref{vgamma},  then implies that
 \begin{equation}\label{veltra}
 v(t)|_{\G(t)} \circ\Xi_{f(t)}=\frac{\sigma}{\mu}\bV(f(t))[\beta(t)]
 \end{equation}
 where, given $f\in H^3(\R)$, the operator   $\bV(f)\in\kL(H^{2}(\R)^2)$  (see \eqref{Bnmk})  is defined by
\begin{equation}\label{trv}
\begin{aligned}
\mathbb{V}(f)[\beta]&:=\frac{1}{4}\begin{pmatrix}
B_{2,2}^0(f)-B_{0,2}^0(f) &B_{3,2}^0(f)-B_{1,2}^0(f)\\[1ex]
B_{3,2}^0(f)-B_{1,2}^0(f)&-3B_{2,2}^0(f)-B_{0,2}^0(f)
\end{pmatrix}\begin{pmatrix}
\beta_1\\
\beta_2
\end{pmatrix}\\[1ex]
&\qquad+\frac{1}{4}\begin{pmatrix}
-B_{3,2}^0(f)-3B_{1,2}^0(f)&-B_{2,2}^0(f)+B_{0,2}^0(f) &\\[1ex]
-B_{2,2}^0(f)+B_{0,2}^0(f)&-B_{3,2}^0(f)+B_{1,2}^0(f)
\end{pmatrix}\begin{pmatrix}
f'\beta_1\\
f'\beta_2
\end{pmatrix}
\end{aligned} 
\end{equation}
for $\beta=(\beta_1,\beta_2)^\top\in H^{2}(\R)^2 $.
Recalling \eqref{StP}$_5$ and \eqref{IC}, we may thus recast~\eqref{STOKES} as the following evolution problem
 \begin{equation}\label{NNEP1}
 \frac{df}{dt}=\frac{\sigma}{\mu}\mathbb{V}(f)[\beta]\cdot (-f',1),\quad t>0,\qquad f(0)=f^{(0)},
 \end{equation}
 where the evolution equation should be  satisfied pointwise with values in  $H^2(\R)$.

\subsection{Problem \eqref{NNEP1} as the limit $\mu_+\to 0$ of the two-phase Stokes problem}\label{Sec:4.3}
In \cite{MP2022} it is shown that if $T_{+}>0$ and $(f, w^\pm,q^\pm)$ is a solution to the two-phase quasistationary Stokes flow~\eqref{2STOKES} such that for all
$t\in(0,T_{+})$ we have $f(t)\in H^3(\R)$, $ w^\pm(t)|_{\G(t)}\circ\Xi_{f(t)}\in H^2(\R)^2$, and 
\begin{align*}
 w^\pm(t)\in {\rm C}^2(\Omega^\pm(t))\cap {\rm C}^1(\overline{\Omega^\pm(t)}),\quad q^\pm(t)\in {\rm C}^1(\Omega^\pm(t))\cap {\rm C}(\overline{\Omega^\pm(t)}),
\end{align*}
then $f=f(t)$ solves the evolution problem
\begin{equation}\label{NNEP2}
 \frac{df}{dt}=\frac{\sigma}{\mu^++\mu^-}\gamma\cdot (-f',1),\quad t>0,\qquad f(0)=f^{(0)},
 \end{equation}
where $\gamma(t):=\gamma(f(t))\in H^2(\R)^2$ is given by
\begin{align}\label{TOSO}
\gamma(t):=\Big(\frac{1}{2}+a_\mu\bD(f(t))\Big)^{-1}[\bV(f(t))[g(t)]]
\end{align} 
 and
\[a_\mu:=\frac{\mu^+-\mu^-}{\mu^++\mu^-}\in(-1,1).\]
In \eqref{TOSO}, $g(f(t))\in H^2(\R)^2$  and $\bV(f(t))\in\kL(H^{2}(\R)^2)$, $t\in(0,T_+)$, are   defined in  \eqref{defgn} and~\eqref{trv},
 respectively, and the evolution equation should be again satisfied pointwise  with values in~$H^2(\R), $ see~\cite[Theorem 4.5]{MP2022}.

 We next prove  that  the formulation~\eqref{NNEP1} coincides with the limit $\mu_+\to0$  of~\eqref{NNEP2}  with~${\mu^-=\mu}$ fixed.
 Then $a_\mu\to -1$, and as Lemma~\ref{L:3} and~\cite[Theorem 4.5]{MP2022} show, this limit can be taken by continuity in \eqref{NNEP2}.

 \begin{prop}\label{P:2}
  Given $f\in H^3(\R)$, it holds that 
 \begin{equation}\label{eq:co}
 \mathbb{V}(f)\Big(\frac{1}{2}+ \bD(f)\Big)^{-1} =\Big(\frac{1}{2}- \bD(f)\Big)^{-1}\mathbb{V}(f). 
 \end{equation}
 \end{prop}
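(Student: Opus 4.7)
The identity \eqref{eq:co} is, by the invertibility of $\tfrac{1}{2}\pm\bD(f)$ in $\kL(H^2(\R)^2)$ ensured by Lemma~\ref{L:3}, equivalent to the anticommutator identity
\[
\bD(f)\,\bV(f)+\bV(f)\,\bD(f)=0 \qquad \text{on } H^2(\R)^2,
\]
and this is what I plan to prove. The strategy is to combine two Betti reciprocity identities for the single-layer-type potentials produced in Theorem~\ref{T:1} with the commutator rule of Lemma~\ref{L:A1}.

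For fixed $\beta_1,\beta_2\in H^2(\R)^2$ I let $(v_i,p_i)$ denote the pair defined via \eqref{Solutions} with density $\beta_i$ in place of $\beta$. By Theorem~\ref{T:1} each $(v_i,p_i)$ solves the homogeneous Stokes system in both $\Omega$ and $\R^2\setminus\overline{\Omega}$, is $\mathrm{C}^2$ in the interior and $\mathrm{C}^1$ up to $\Gamma$, and decays at infinity. Its velocity trace is continuous across $\Gamma$ and equals $\mu^{-1}\bV(f)[\beta_i]$ after pull-back by $\Xi$, cf.~\eqref{vgamma}--\eqref{trv}, while its one-sided pulled-back stresses satisfy
\[
\omega\, T_\mu(v_i,p_i)\tilde\nu\big|_\Gamma^{\pm}\circ\Xi=\Bigl(\mp\tfrac{1}{2}-\bD(f)^{\ast}\Bigr)[\beta_i']\,;
\]
the ``$-$''-side is exactly the identity reducing \eqref{SBVP}$_3$ to \eqref{invertcom} in the proof of Theorem~\ref{T:1}, and the ``$+$''-side arises from the same computation after flipping the sign of the jump terms in \eqref{repp}.

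Applying Betti's reciprocal theorem to $(v_1,p_1)$ and $(v_2,p_2)$ first in $\Omega$ and then in $\R^2\setminus\overline{\Omega}$, using the decay at infinity to discard the far-field boundary contributions, and pulling back to $\R$, I obtain the two bilinear identities
\[
\Bigl\langle\bV(f)[\beta_1]\,\Big|\,\Bigl(\tfrac{1}{2}\mp\bD(f)^{\ast}\Bigr)[\beta_2']\Bigr\rangle_2=\Bigl\langle\bV(f)[\beta_2]\,\Big|\,\Bigl(\tfrac{1}{2}\mp\bD(f)^{\ast}\Bigr)[\beta_1']\Bigr\rangle_2.
\]
Summing and subtracting these two, and using that $\bD(f)$ is the $L_2$-adjoint of $\bD(f)^{\ast}$, I extract the two symmetrized relations
\[
\langle\bV(f)[\beta_1]\,|\,\beta_2'\rangle_2=\langle\bV(f)[\beta_2]\,|\,\beta_1'\rangle_2,\quad \langle\bD(f)\bV(f)[\beta_1]\,|\,\beta_2'\rangle_2=\langle\bD(f)\bV(f)[\beta_2]\,|\,\beta_1'\rangle_2.\quad(\star)
\]

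To finish, I substitute $\bD(f)[\beta_1]\in H^2(\R)^2$ (use Lemma~\ref{L:MP0}(iv)) in place of $\beta_1$ in the first relation of $(\star)$ and apply Lemma~\ref{L:A1} in the form $(\bD(f)[\beta_1])'=-\bD(f)^{\ast}[\beta_1']$ together with the second relation of $(\star)$:
\[
\langle\bV(f)\bD(f)[\beta_1]\,|\,\beta_2'\rangle_2=-\langle\bV(f)[\beta_2]\,|\,\bD(f)^{\ast}[\beta_1']\rangle_2=-\langle\bD(f)\bV(f)[\beta_2]\,|\,\beta_1'\rangle_2=-\langle\bD(f)\bV(f)[\beta_1]\,|\,\beta_2'\rangle_2.
\]
Hence $(\bV(f)\bD(f)+\bD(f)\bV(f))[\beta_1]\in H^2(\R)^2\subset L_2(\R)^2$ is $L_2$-orthogonal to $\beta_2'$ for every $\beta_2\in H^2(\R)^2$; as the set of such derivatives is $L_2$-dense in the subspace of mean-zero functions, the anticommutator must reduce to a constant, hence to zero by integrability. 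The one delicate point is the derivation of the ``$+$''-side stress formula: it requires reprising the trace calculation from the proof of Theorem~\ref{T:1} for the exterior domain, tracking the opposite sign in the jump terms of \eqref{repp}. Once this is in place everything else is a purely algebraic manipulation of Lemmas~\ref{L:3}, \ref{L:A1}, and \ref{L:MP0}.
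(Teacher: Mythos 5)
Your proof is correct in its conclusion and its key lemmas are accurate, but the route is genuinely different from the paper's. The paper proves $\bV(f)\bD(f)+\bD(f)\bV(f)=0$ by a superposition argument: it extends the single-layer flow $(v,p)$ generated by $\beta$ trivially to $\Omega^+$, interprets the resulting pair as the solution to a two-phase transmission problem, splits that solution into a pure-stress-jump part (whose boundary trace is $\bV(f)[\gamma]$ with $\gamma=(\tfrac12+\bD(f))[\beta]$, by \cite[Theorem~2.1, Lemma~A.1]{MP2021}) and a pure-velocity-jump part (whose trace is $(\tfrac12+\bD(f))[v|_\Gamma\circ\Xi]$, by \cite[Lemma~A.1]{MP2022}), and reads off the anticommutator identity by comparing traces. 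Your argument instead deploys Lorentz--Betti reciprocity in each half-plane, which — once the far-field contributions are discarded and the one-sided stress traces $\omega T_\mu(v_i,p_i)\tilde\nu|^\pm_\Gamma\circ\Xi=(\mp\tfrac12-\bD(f)^\ast)[\beta_i']$ are read off from \eqref{ffff} — yields the two symmetry relations $(\star)$, and then closes the circle by the derivative rule $(\bD(f)[\cdot])'=-\bD(f)^\ast[(\cdot)']$ of Lemma~\ref{L:A1} and $L_2$-adjointness of $\bD(f)$ and $\bD(f)^\ast$. The reciprocity route makes the underlying symmetry of the layer potentials explicit and replaces the paper's appeal to the uniqueness and representation theory of two auxiliary transmission problems by a single integration-by-parts identity; in exchange you need to justify that the integrals over $\partial B_R\cap\Omega^\pm$ vanish as $R\to\infty$ (this works because $\beta_i'$ has zero integral, so for $\beta_i\in{\rm C}_c^\infty$ one has $v_i=O(|x|^{-1})$, $T_\mu(v_i,p_i)=O(|x|^{-2})$, and then one passes to general $\beta_i\in H^2(\R)^2$ by density using the $L_2$-continuity of $\bV(f)$ and $\bD(f)^\ast$ from Lemma~\ref{L:MP0}~(i)). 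One small imprecision in your last step: the set $\{\beta_2':\beta_2\in H^2(\R)^2\}$ is not characterized by a mean-zero condition; the correct argument is that any $\psi\in L_2(\R)^2$ orthogonal to all $\varphi'$ with $\varphi\in{\rm C}_c^\infty(\R)^2\subset H^2(\R)^2$ has vanishing distributional derivative, hence is a constant, hence is zero in $L_2(\R)^2$ — which is in fact what you use.
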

 \begin{proof}
 We are going to prove the more compact identity
 \begin{equation}\label{eq:co2}
\bV(f)\bD(f)+\bD(f) \bV(f)=0,
 \end{equation}
 which is equivalent to \eqref{eq:co} in view of Lemma~\ref{L:3}.

Let $\beta\in H^2(\R)^2$ be arbitrary and let $(v,p):=(v,p)(f)[\beta]$ be defined by \eqref{Solutions}  (with $\mu=1$). 
Then, as shown in Theorem~\ref{T:1}, it holds that 
 \[
v\in {\rm C}^2(\0)\cap {\rm C}^1 (\ov\0),\quad  p\in {\rm C}^1(\0)\cap {\rm C} (\ov\0),\quad   v|_\G\circ\Xi=\bV(f)[\beta] \in H^2(\R)^2,
 \] 
and, recalling  also Corollary~\ref{C:1}, $(v,p)$ solves the boundary value problem
 \begin{equation}\label{SBVP21}
\left.
\begin{array}{rclll}
\Delta v-\nabla p&=&0&\mbox{in $\Omega$,}\\
\vdiv v&=&0&\mbox{in $\Omega$,}\\{}
T_1(v, p)\tilde\nu&=&(\gamma'/\omega)\circ\Xi^{-1}&\mbox{on $\Gamma$,}\\
(v, p)(x)&\to&0&\mbox{for $|x|\to\infty$,}
\end{array}\right\}
\end{equation}
where
\begin{equation}\label{ran0}
\gamma:= \Big(\frac{1}{2}+\bD(f)\Big)[\beta]\in H^2(\R)^2.
\end{equation}
We next define~${(w^\pm,q^\pm):\0^\pm\longrightarrow\R^2\times\R}$ by $(w^-,w^+)=(  v,0)$ and $(q^-,q^+)=(p,0)$.
Then
\begin{equation}\label{regwq}
w^\pm\in {\rm C}^2(\0^\pm)\cap {\rm C}^1 (\ov{\0^\pm}),\quad  q^\pm\in {\rm C}^1(\0^\pm)\cap {\rm C} (\ov{\0^\pm}),\quad   w^\pm|_\G \circ\Xi\in H^2(\R)^2,
 \end{equation}
and  $(w^\pm,q^\pm)$ solves the boundary value problem
\be\label{bvpaux21}
\left.\begin{array}{rcll}
\Delta w^\pm-\nabla q^\pm&=&0&\mbox{in $\Omega^\pm$,}\\
\vdiv w^\pm&=&0&\mbox{in $\Omega^\pm$,}\\{}
[w]&=&-v|_\Gamma&\mbox{on $\Gamma$,}\\
{}[T_1(w,q)]\tilde \nu &=&-(\gamma'/\omega)\circ\Xi^{-1}&\mbox{on $\Gamma$,}\\
(w^\pm,q^\pm)(x)&\to&0&\mbox{for $|x|\to\infty$.}
\end{array}\right\}
\ee
Similarly  as in the proof of \cite[Proposition 5.1]{MP2022}, we decompose $(w^\pm,q^\pm)$ as a sum 
\[
(w^\pm,q^\pm)=(w^\pm_s,q^\pm_s)+(w^\pm_d,q^\pm_d),
\]
where $(w^\pm_s,q^\pm_s)$ solves  
\be\label{bvpaux21a}
\left.\begin{array}{rcll}
\Delta w_s^\pm-\nabla q_s^\pm&=&0&\mbox{in $\Omega^\pm$,}\\
\vdiv w_s^\pm&=&0&\mbox{in $\Omega^\pm$,}\\{}
[w_s]&=&0&\mbox{on $\Gamma$,}\\
{}[T_1(w_s,q_s)]\tilde \nu&=&-(\gamma'/\omega)\circ\Xi^{-1}&\mbox{on $\Gamma$,}\\
(w_s^\pm,q_s^\pm)(x)&\to&0&\mbox{for $|x|\to\infty$.}
\end{array}\right\}
\ee
The system \eqref{bvpaux21a} has been studied in \cite{MP2021} and, according to \cite[Theorem~2.1 and Lemma~A.1]{MP2021}, it has a unique solution which satisfies
\begin{equation}\label{ran1}
w_s^\pm|_\Gamma\circ\Xi=\bV(f)[\gamma]
\end{equation}
  and which has the same regularity as $(w,q)$, see \eqref{regwq}. 
Consequently, $(w^\pm_d,q^\pm_d)$ enjoys also the regularity \eqref{regwq} and moreover it solves 
\be\label{bvpaux22}
\left.\begin{array}{rcll}
\Delta w^\pm_d-\nabla q^\pm_d&=&0&\mbox{in $\Omega^\pm$,}\\
\vdiv w^\pm_d&=&0&\mbox{in $\Omega^\pm$,}\\{}
[w_d]&=&-v|_\Gamma&\mbox{on $\Gamma$,}\\
{}[T_1(w_d,q_d)]\tilde \nu&=&0&\mbox{on $\Gamma$,}\\
(w_d^\pm,q_d^\pm)(x)&\to&0&\mbox{for $|x|\to\infty$.}
\end{array}\right\}
\ee
Since $v|_\Gamma\circ\Xi \in H^2(\R)^2$, also \eqref{bvpaux22} has a unique solution, see \cite[Proposition 2.1]{MP2022}, and, according to \cite[Lemma A.1]{MP2022} we have
\begin{equation}\label{ran2}
w^-_d|_\Gamma\circ \Xi=\Big(\frac{1}{2}+\bD(f)\Big)[v|_\Gamma\circ\Xi].
\end{equation}
Since $\bV(f)[\beta]=v|_\Gamma\circ\Xi=(w^-_s+w^-_d)|_\Gamma\circ\Xi,$ we infer from \eqref{ran0}, \eqref{ran1}, and \eqref{ran2} that 
\[
\bV(f)[\beta]=\bV(f)\Big(\frac{1}{2}+\bD(f)\Big)[\beta]+\Big(\frac{1}{2}+\bD(f)\Big)[\bV(f)[\beta]] \qquad\text{for all $\beta\in H^2(\R)^2$,}
\]
and \eqref{eq:co2} is now a direct consequence  of this identity.
\end{proof}

\subsection{The common equivalent formulation for \eqref{STOKES} and \eqref{2STOKES}}\label{Sec:4.4}\ 
 We are now able to simultaneously consider the evolution equations \eqref{NNEP1} and \eqref{NNEP2} that respectively encode the one-phase problem \eqref{STOKES} and the two-phase problem \eqref{2STOKES}. 
 For this purpose we set 
\[
\mu^-=\mu  
\]
and view $\mu^+\in[0,\infty)$ as a parameter.

For $f\in H^3(\R)$, we set 
\begin{equation}\label{Phi}
 \Phi(\mu^+,f):=\frac{\sigma}{\mu^++\mu}\Big(\frac{1}{2}+\frac{\mu^+-\mu}{\mu^++\mu}\bD(f)\Big)^{-1}[\bV(f)[g(f)]]\cdot (-f',1),
 \end{equation}
with  $g(f)$  defined in \eqref{defgn} and $\bV(f)$ in \eqref{trv}.

Now, in view of Proposition~\ref{P:2}, the parameter dependent evolution equation
\begin{equation}\label{NNEP}
 \frac{df}{dt}=\Phi(\mu^+,f),\quad t\geq 0,\qquad f(0)=f^{(0)},
 \end{equation}
 is identical to \eqref{NNEP1} for $\mu^+=0$ and to \eqref{NNEP2} for $\mu^+>0$.

Though \eqref{NNEP} has been derived under the assumption that the solution lies in $ H^3(\R)$ for positive times,  this equation may be viewed in a more general analytic setting.
Indeed, given~$f\in H^{s}(\R)$, $s\in(3/2,2)$, we first deduce from \cite[Lemma~3.5]{MP2021} that~${g(f)\in H^{s-1}(\R)^2}$ and
\begin{equation}\label{regg}
g\in {\rm C}^\infty(H^{s}(\R),H^{s-1}(\R)^2).
\end{equation} 
Moreover, since   
\[
[f\longmapsto B^0_{n,m}(f)]\in{\rm C}^\infty(H^s(\R),\kL( H^{s-1}(\R))),\qquad n,\, m\in\N,
\] 
  see \cite[Corollary C.5]{MP2021}, 
 \eqref{DFB}$_1$  and \eqref{trv}  yield
  \begin{equation}\label{regBV}
[f\longmapsto \bD(f)],\, [f\longmapsto \bV(f)] \in{\rm C}^\infty(H^s(\R),\kL( H^{s-1}(\R)^2)).
\end{equation}
The properties \eqref{regg}, \eqref{regBV}, Lemma~\ref{L:4} (for $\mu^+=0$) and \cite[Theorem 4.2]{MP2022} (for $\mu^+>0$) ensure now that the operator~${\Phi:[0,\infty)\times H^s(\R)\longrightarrow H^{s-1}(\R)}$ is well-defined.
Moreover, the smoothness of the function which maps an isomorphism to its inverse together with \eqref{regg}  and \eqref{regBV} implies that 
  \begin{equation}\label{regPhi}
  \Phi\in {\rm C}^\infty([0,\infty)\times H^s(\R), H^{s-1}(\R)).
  \end{equation}
  We conclude this section with the observation that $\Phi(\mu^+,\cdot)$ maps bounded sets of $H^s(\R)$ to bounded sets of $H^{s-1}(\R).$
  This property is a consequence of the fact that $g$ maps  bounded sets of~$H^{s}(\R)$ to bounded sets of $H^{s-1}(\R)^2$, of Lemma~\ref{L:MP0}~(iii), 
  and of Lemma~\ref{L:4} (for $\mu^+=0$) or \cite[Theorem 4.2]{MP2022} (for $\mu^+>0$).

 \subsection{The proofs of the main results}\label{Sec:4.5}
 In the case  $\mu^+>0$, we  shown in \cite[Theorem~6.1]{MP2022} 
 that the Fr\'echet derivative $\p_f\Phi(\mu^+,f)$ is, for each $f\in H^{s}(\R),$ the generator of an analytic semigroup in $\kL(H^{s-1}(\R)).$
 This property, the observation that $\Phi(\mu^+,\cdot)$ maps bounded sets of~${H^s(\R)}$ to bounded sets of $H^{s-1}(\R),$ 
 the smoothness of $\Phi(\mu^+,\cdot)$, the fully nonlinear parabolic theory from \cite{L95}, and a parameter
  trick used also for other problems, see e.g. \cite{An90, ES96, PSS15, MBV19},
  were then exploited in \cite{MP2022} to establish the well-posedness of the two-phase quasistationary Stokes flow \eqref{2STOKES}, see \cite[Theorem~1.1]{MP2022}.
 All these properties are satisfied also when $\mu^+=0$.
 Indeed, using Lemma~\ref{L:4} instead of \cite[Theorem~4.2]{MP2022}, the arguments in the proof of 
 \cite[Theorem~6.1]{MP2022} remain valid also if $\mu^+=0$, hence also $\p_f\Phi(0,f)$ is the generator of an analytic semigroup in~${\kL(H^{s-1}(\R))}$ for each $f\in H^{s}(\R)$.
  In view of these facts we have: 
  
  \begin{proof}[Proof of Theorem~\ref{MT1}]
  The proof is identical to that of  \cite[Theorem~1.1]{MP2022} and therefore we omit the details.
  \end{proof}
  
  For the  proof of Theorem~\ref{MT2} we consider $\mu^+$ as a parameter and we use a result on the continuous dependence of the solutions to abstract parabolic problems on  parameters 
  provided in \cite[Theorem 8.3.2]{L95}
    \begin{proof}[Proof of Theorem~\ref{MT2}]
 In view of \eqref{regPhi} and of the fact that Fr\'echet derivative $\p_f\Phi(\mu^+,f)$ is, for each $(\mu^+,f)\in[0,\infty)\times H^{s}(\R),$ the generator of an analytic semigroup in $\kL(H^{s-1}(\R))$ we find that all the assumptions of
  \cite[Theorem 8.3.2]{L95} are satisfied in the context of \eqref{NNEP}.

Let thus~$f(\cdot;f^{(0)}):[0,T_+( f^{(0)}))\longrightarrow\R$ denote the maximal solution to \eqref{NNEP} with $\mu^+=0$ and fix~${T\in(0,T_+( f^{(0)}))}$.
In view of \cite[Theorem 8.3.2]{L95} there exist constants $\varepsilon>0$ and~${M>0}$ such that for each $\mu^+ \in(0,\varepsilon]$ 
  the solution $f_{\mu^+}(\cdot; f^{(0)})$ to~\eqref{NNEP} found in \cite[Theorem 6.1]{MP2022} satisfies $T_{+,\mu^+}( f^{(0)})>T$ and
  \[
\big\|f(\cdot ;f^{(0)})- f_{\mu^+}(\cdot; f^{(0))}\big\|_{{\rm C}([0,T], H^s(\R))}
+\Big\|\frac{d}{dt}\big(f(\cdot; f^{(0)})- f_{\mu^+}(\cdot; f^{(0)})\big)\Big\|_{{\rm C}([0,T], H^{s-1}(\R))}\leq M\mu^+.
  \]
  This completes the proof.
  \end{proof}

 \bibliographystyle{siam}
 \bibliography{MP3}
\end{document}